\makeindex \setcounter{tocdepth}{2}
\theoremstyle{plain}
\newtheorem{theorem}{Theorem}[section]
\newtheorem{proposition}[theorem]{Proposition}
\newtheorem{corollary}[theorem]{Corollary}
\newtheorem*{assumption*}{Assumption}
\newtheorem{lemma}[theorem]{Lemma}
\theoremstyle{definition}
\newtheorem{definition}[theorem]{Definition}
\newtheorem{remark}[theorem]{Remark}
\newtheorem{example}[theorem]{Example}
\newtheorem*{goal*}{Goal}
\newtheorem*{problem*}{Comment}
\def\lra{{\longrightarrow \, }}
\def\SL{{\rm SL}}
\def\GL{{\rm GL}}
\def\A{\mathbb{A}}
\DeclareMathOperator{\cyc}{cyc}
\DeclareMathOperator{\cond}{cond}
\DeclareMathOperator{\spe}{sp}
\DeclareMathOperator{\dR}{dR} \DeclareMathOperator{\pr}{pr}
\DeclareMathOperator{\Det}{Det}
 \DeclareMathOperator{\cl}{cl}
\DeclareMathOperator{\et}{et}
\DeclareMathOperator{\End}{End} \DeclareMathOperator{\CH}{CH}
\DeclareMathOperator{\AJ}{AJ} 
\DeclareMathOperator{\spec}{Spec} \DeclareMathOperator{\sgn}{sgn}
\DeclareMathOperator{\ord}{ord}
\DeclareMathOperator{\lcm}{lcm}
\DeclareMathOperator{\tr}{tr}
\DeclareMathOperator{\ind}{ind}
\DeclareMathOperator{\gks}{GKS}
\DeclareMathOperator{\orb}{orb}
\def\d{\mathrm{d}}
\def\Z{\mathbb{Z}}
\def\F{\mathbb{F}}
\def\Q{\mathbb{Q}}
\def\C{\mathbb{C}}
\def\R{\mathbb{R}}
\def\bdf{\begin{defn}}
\def\edf{\end{defn}}
\def\Gal{{\rm Gal}}
\def\ab{{\rm ab}}
\def\cC{{\mathcal C}}
\def\ab{\text{ab}}
\def\d1{d^{(1)}}
\def\bA{\mathbb{A}}
\def\d{\mathrm{d}}
\def\T{\mathbb{T}}
\tikzset{
commutative diagrams/.cd,
arrow style=tikz,
diagrams={>=latex}}
\let\@wraptoccontribs\wraptoccontribs
\begin{document} 

 \title{Twisted triple product root numbers and a cycle of Darmon--Rotger}
\author{David T.-B. G. Lilienfeldt}
\address{Mathematical Institute, Leiden University, The Netherlands} 
\email{d.t.b.g.lilienfeldt@math.leidenuniv.nl}
\date{\today}
\subjclass[2020]{11G40, 11G18, 11F11, 14C25}
\keywords{L-functions, algebraic cycles, modular forms, triple products, modular curves}

\begin{abstract}
We consider an algebraic cycle on the triple product of the prime level modular curve $X_0(p)$ with origins in work of Darmon and Rotger. It is defined over the quadratic extension of $\Q$ ramified only at $p$ whose associated quadratic character $\chi$ is the Legendre symbol at $p$. We prove that it is null-homologous and describe actions of various groups on it.  
For any three normalised cuspidal eigenforms $f_1, f_2, f_3$ of weight $2$ and level $\Gamma_0(p)$, we prove that the global root number of the twisted triple product $L$-function $L(f_1\otimes f_2\otimes f_3\otimes \chi, s)$ is $-1$. Assuming conjectures of Beilinson and Bloch, and guided by the Gross--Zagier philosophy, this suggests that the Darmon--Rotger cycle could be non-torsion, although we do not currently have a proof of this.
\end{abstract}

\maketitle

\section{Introduction}

\subsection{The Darmon--Rotger cycle}\label{s:11}

Let $p$ be a prime number and fix a $p$-th root of unity $\zeta_p$ in $\bar{\Q}$. Let $X_0(p)$ denote the proper modular curve of level $\Gamma_0(p)$ defined over $\Q$ (see Section \ref{s:mod}). Let $K\subset \Q(\zeta_p)$ denote the unique quadratic extension of $\Q$ ramified only at $p$, and write $\Gal(K/\Q)=\{1,\tau\}$. 
Given an elliptic curve $E$ over a $\Q(\zeta_p)$-scheme $S$ and three pairwise distinct finite flat cyclic subgroup schemes $C_1, C_2, C_3 \subset E[p]$ of rank $p$ with generators $P_1, P_2, P_3\in E[p](S)$, denote by $e_p$ the Weil pairing \cite[(2.8.5.1)]{katzmazur} on $E[p]$ and write 
\[ e_p(P_2,P_3)=\zeta_p^a, \qquad e_p(P_3,P_1)=\zeta_p^b, \qquad e_p(P_1,P_2)=\zeta_p^c, \qquad \text{ with } a,b,c \in \mathbb{F}_p^{\times}. \]
Using this notation, define the invariant
\[
o(E; C_1, C_2, C_3) := abc \in \mathbb{F}_p^{\times}\slash (\mathbb{F}_p^{\times})^{(2)},
\]
where $(\mathbb{F}_p^{\times})^{(2)}$ denotes the group of non-zero quadratic residues modulo $p$. This invariant does not depend on the choices of generators of the cyclic subgroups. Darmon--Rotger \cite{darmonrotgerast} defined two curves $\Delta_+, \Delta_-\subset X_0(p)^3_{\bar{\Q}}$ respectively as the schematic closures of
\[
\{(E,C_1), (E,C_2), (E,C_3) \colon o(E; C_1, C_2, C_3)=1 \}
\]
and 
\[
\{(E,C_1), (E,C_2), (E,C_3) \colon o(E; C_1, C_2, C_3)\neq 1\}.
\] 
We refer to the difference of these two curves as the Darmon--Rotger cycle and prove the following:
\begin{theorem}\label{thm:intro2}
    The Darmon--Rotger cycle is null-homologous and its rational equivalence class gives rise to an element $\Xi:=\Delta_+-\Delta_-$ in the Chow group $\CH^2(X_0(p)^3_{K})_0^{\tau=-1}.$ The element $\Xi$ is fixed by the action of the symmetric group $S_3$ if $p\equiv 1\pmod 4$. If $p\equiv 3\pmod 4$, then $S_3$ acts on $\Xi$ by the sign character.
\end{theorem}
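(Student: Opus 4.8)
The plan is to separate the statement into three largely independent ingredients — the field of definition together with the $\tau$-eigenvalue, the $S_3$-action, and null-homology — all of which reduce to the transformation behaviour of the invariant $o(E;C_1,C_2,C_3)$, so I would begin there. After choosing a symplectic basis of $E[p]$ and writing $C_i$ as the $\F_p$-line spanned by $v_i\in\F_p^2$, equivariance of the Weil pairing and the fixed choice of $\zeta_p$ identify $a,b,c$ with the alternating forms $\langle v_2,v_3\rangle$, $\langle v_3,v_1\rangle$, $\langle v_1,v_2\rangle$, so that $o=\langle v_2,v_3\rangle\langle v_3,v_1\rangle\langle v_1,v_2\rangle$ in $\F_p^\times/(\F_p^\times)^{(2)}$. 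Two facts then drop out: (a) transposing two of the subgroups negates each of the three alternating forms, hence negates $o$, so $S_3$ acts on $o$ through $\sgn$; and (b) a Galois automorphism acting on $\mu_p$ by $t\in\F_p^\times$ scales each of $a,b,c$ by $t$, hence scales $o$ by $t^{3}\equiv t\pmod{(\F_p^\times)^{(2)}}$.

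From (b), such an automorphism preserves the locus $\{o=1\}$ exactly when $t=\chi_{\mathrm{cyc}}(\sigma)$ is a quadratic residue, i.e. exactly on $\Gal(\bar{\Q}/K)$, where $K$ is the fixed field of $\left(\tfrac{\cdot}{p}\right)\circ\chi_{\mathrm{cyc}}$ — precisely the claimed quadratic field, with quadratic character $\chi$ — and $\tau$ interchanges $\{o=1\}$ with $\{o\ne1\}$. Taking Zariski closures, $\Delta_\pm$ descend to $X_0(p)^3_K$ and $\tau(\Delta_+)=\Delta_-$, so $\tau(\Xi)=-\Xi$; a short dimension count (the three subgroups lie on one common elliptic curve, which moves in a one-parameter family, so these loci are curves) makes $\Xi$ a codimension-two cycle, hence a class in $\CH^2$. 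The $S_3$-assertion is the analogous closure argument applied to (a): an even permutation fixes $\{o=1\}$, whereas an odd permutation sends it to $\{-o=1\}$, which coincides with $\{o=1\}$ when $-1$ is a square (that is, $p\equiv1\pmod4$) and with $\{o\ne1\}$ otherwise; so for $p\equiv1\pmod4$ each of $\Delta_\pm$ is $S_3$-stable and $\Xi$ is fixed by $S_3$, while for $p\equiv3\pmod4$ the transpositions swap $\Delta_+$ and $\Delta_-$ and $S_3$ acts on $\Xi$ through $\sgn$.

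The substance is null-homology. Writing $X=X_0(p)$, I would prove $[\Delta_+]=[\Delta_-]$ in $H^4(X^3_{\bar{\Q}},\Q_\ell(2))$, equivalently in $H_2$, by matching all Künneth components: since $X$ is a curve, the class of a $1$-cycle on $X^3$ has nonzero components only of Künneth type $(2,0,0)$ and of type $(1,1,0)$ (and their permutations), and these are read off, respectively, from the pushforwards $\pi_{i*}$ to the three factors and $\pi_{ij*}$ to the three two-fold products. Here $\pi_{i*}[\Delta_\pm]=\deg(\pi_i|_{\Delta_\pm})\cdot[X]$, while $\pi_{ij*}[\Delta_\pm]$ is supported on the irreducible ``isogeny'' curve $Z_{ij}\subset X^2$ — the closure of the locus of $((E,C),(E,C'))$ with $C\ne C'$ — and therefore equals $\deg(\pi_{ij}|_{\Delta_\pm})\cdot[Z_{ij}]$, no component of $\Delta_\pm$ collapsing under $\pi_{ij}$ because a fibre of $\pi_{ij}$ meets the common-elliptic-curve locus in a finite set. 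Now $\pi_i$, $\pi_{ij}$, $X$ and $Z_{ij}$ are all defined over $\Q$, whereas $\tau$ interchanges $\Delta_+$ and $\Delta_-$; conjugating each of these generically finite morphisms by $\tau$ shows that every degree above is the same for $\Delta_+$ and for $\Delta_-$, so all Künneth components of $[\Delta_+]-[\Delta_-]$ vanish and $\Xi\in\CH^2(X_0(p)^3_K)_0^{\tau=-1}$. (Alternatively one computes the degrees by hand: taking $C_1=[1:0]$ and $C_2,C_3=[s:1],[t:1]$ gives $o=t-s$, so the three-factor degrees are $p(p-1)/2$ and the two-factor degrees $(p-1)/2$, in both residue classes of $p$ modulo $4$.) The one point I expect to require care is the bookkeeping in this last step — checking that the chosen projections detect all of the non-trivial cohomology of $X^3$ and that the curves $Z_{ij}$ are honestly irreducible, which uses that the mod-$p$ monodromy along $X_0(p)$ is as large as possible; this holds in the range where $X_0(p)$ has positive genus, and when $X_0(p)$ has genus $0$ there are no $(1,1,0)$-components, so the irreducibility of $Z_{ij}$ plays no role.
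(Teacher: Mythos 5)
Your argument is correct, and the null-homology step is handled by a genuinely different device than the paper uses. The field-of-definition and $S_3$ computations are effectively the paper's (Lemmas~\ref{coro3}, \ref{galaction}, \ref{lem:fodK}, Proposition~\ref{lemma:S3}), just phrased directly in terms of the invariant $o$ rather than via the $\SL_2(\F_p)$-orbit set $\widetilde{I}$ and the determinant map. For null-homology, both of the paper's proofs (Gross--Schoen projector in Theorem~\ref{prop:xihom}, and the de Rham K\"unneth pairing in the remark following it) hinge on Lemma~\ref{lem:prphi}: one can parametrize $\Delta_+$ and $\Delta_-$ by maps $\varphi_\pm(ij)\colon X(p)\to X_0(p)^3$ chosen so that $\pr_{ij}\circ\varphi_+(ij)=\pr_{ij}\circ\varphi_-(ij)$ identically, which makes every projected invariant match for free. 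You instead push the cycles forward along the $\pr_{ij}$, observe the result is a multiple of the geometrically irreducible $\Q$-rational isogeny curve $Z_{ij}\subset X_0(p)^2$, and equate the multiplicities for $\Delta_+$ and $\Delta_-$ either by the Galois symmetry ($\tau$ swaps them while $\pr_{ij}$ and $Z_{ij}$ are $\Q$-rational) or by a short explicit degree count. The Galois trick is pleasantly computation-free; the paper's parametrization lemma is a slightly stronger cycle-level identity (it yields $\pr_{ij*}\Delta_+=\pr_{ij*}\Delta_-$ in $\CH^1(X_0(p)^2)$ outright, with no irreducibility input) and is the common engine for both of its proofs. One small inaccuracy in your closing caveat: the geometric irreducibility of $Z_{ij}$ holds for every prime $p$ --- it is the image of the geometrically connected curve $X(p)$ under a morphism (and is a model of $X_0(p^2)$) --- so it does not depend on $X_0(p)$ having positive genus or on a large-monodromy input; that hypothesis is in force throughout the paper for unrelated reasons.
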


In order to prove this, we give an alternative description of $\Delta_+$ and $\Delta_-$ as images of certain maps $X(p)\lra X_0(p)^3$, where $X(p)$ denotes the proper modular curve of full level $p$ structure defined over $\Q(\zeta_p)$ (see Section \ref{s:mod}).

\subsection{Root numbers}

Consider $F:=f_1\otimes f_2\otimes f_3 \in S_2(\Gamma_0(p))^{\otimes 3}$, the tensor product of three normalised cuspidal eigenforms with respect to the Hecke algebra $\T=\End_{\Q}(J_0(p))\otimes \Q$. Let $\chi$ denote the Legendre symbol at $p$, which is the quadratic character associated to the extension $K/\Q$. We consider the triple product $L$-function $L(F, \chi, s)$ attached to the compatible family of $8$-dimensional $\ell$-adic $\Gal(\bar{\Q}/\Q)$-representations $$V_{\ell}(F,\chi):=V_{\ell}(f_1)\otimes V_{\ell}(f_2)\otimes V_{\ell}(f_3)\otimes \chi,$$ where $V_{\ell}(f_i)$, for $i=1,2,3$, is the usual $2$-dimensional representation attached to a cuspform of weight $2$. Define the completed $L$-function $$\Lambda^*(F,\chi,s):=2^4 p^{4s}(2\pi)^{3-4s}\Gamma(s-1)^3\Gamma(s)L(F, \chi, s).$$ This function has analytic continuation to the entire $s$-plane and satisfies the functional equation $$\Lambda^*(F,\chi,s)=W(F,\chi)\Lambda^*(F,\chi,4-s),$$ where $W(F,\chi)\in \{ \pm 1 \}$ is the global root number \cite{harriskudla,PSR}. We prove the following:
\begin{theorem}\label{thm:intro}
    The $L$-function $L(F,\chi,s)$ vanishes to odd order at its center $s=2$.
\end{theorem}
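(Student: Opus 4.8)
The strategy is to compute the global root number $W(F,\chi)$: by the functional equation $\Lambda^*(F,\chi,s)=W(F,\chi)\Lambda^*(F,\chi,4-s)$, and since the $\Gamma$-factor $\Gamma(s-1)^3\Gamma(s)$ and the exponential factor of $\Lambda^*$ are holomorphic and non-vanishing at the centre $s=2$, the assertion that $L(F,\chi,s)$ vanishes to odd order at $s=2$ is equivalent to $W(F,\chi)=-1$. Recall $W(F,\chi)\in\{\pm1\}$, and that it factors as a product $\prod_v W_v$ over the places $v$ of $\Q$ of the local root numbers $W_v\in\{\pm1\}$ attached to $V_\ell(F,\chi)$. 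The plan is to show $W_\ell=+1$ for every prime $\ell\neq p$, that $W_\infty=-1$, and that $W_p=+1$. The first point is immediate: each $f_i$ is a newform of exact level $p$ (since $S_2(\SL_2(\Z))=0$, there are no oldforms in $S_2(\Gamma_0(p))$), and $\chi$ has conductor $p$; hence $V_\ell(F,\chi)$ is unramified at every $\ell\neq p$ and $W_\ell=+1$ there.

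At the archimedean place, the $L$-parameter attached to each $f_i$ is, in the unitary normalisation, the weight-$2$ discrete series parameter $\phi_2=\mathrm{Ind}_{W_{\mathbb C}}^{W_{\mathbb R}}(z\mapsto (z/\bar z)^{1/2})$. A short computation with induced representations yields $\phi_2^{\otimes 3}\cong\phi_4\oplus\phi_2^{\oplus 3}$, where $\phi_k=\mathrm{Ind}_{W_{\mathbb C}}^{W_{\mathbb R}}(z\mapsto (z/\bar z)^{(k-1)/2})$ is the weight-$k$ parameter; this matches the shape of the $\Gamma$-factor $\Gamma_{\mathbb C}(s)\Gamma_{\mathbb C}(s-1)^3$ in $\Lambda^*$. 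The archimedean component $\chi_\infty$ is trivial if $p\equiv 1\pmod 4$ and the sign character if $p\equiv 3\pmod 4$, and in either case tensoring by $\chi_\infty$ leaves $\phi_2^{\otimes 3}$ unchanged, the sign character of $W_{\mathbb R}$ being trivial on $W_{\mathbb C}$. Using the classical value $i^k$ for the archimedean root number of a weight-$k$ eigenform, I get $W_\infty=\epsilon(\tfrac12,\phi_4)\cdot\epsilon(\tfrac12,\phi_2)^3=i^4\cdot(i^2)^3=-1$.

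The decisive step is the local root number at $p$. As $f_i$ is a weight-$2$ newform of exact level $p$, the representation $\pi_{i,p}$ is an unramified quadratic twist $\mathrm{St}(\eta_i)$ of the Steinberg representation. Writing the twisted triple product $L$-function as the triple product $L$-function of the triple $(\pi_1\otimes\chi,\pi_2,\pi_3)$, its $p$-component is $\big(\mathrm{St}(\eta_1\chi_p),\mathrm{St}(\eta_2),\mathrm{St}(\eta_3)\big)$ --- three discrete series representations whose central characters multiply to $1$, each $\eta_i$ and $\chi_p$ being quadratic. I would then apply Prasad's dichotomy for local trilinear forms: exactly one of $\Hom_{\GL_2(\Q_p)}\big(\mathrm{St}(\eta_1\chi_p)\otimes\mathrm{St}(\eta_2)\otimes\mathrm{St}(\eta_3),\C\big)$ and the corresponding space for $D^\times$ (with $D$ the quaternion division algebra over $\Q_p$) is non-zero, and $W_p=+1$ in the first case, $W_p=-1$ in the second. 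The Jacquet--Langlands transfers to $D^\times$ are the characters $(\eta_1\chi_p)\circ\mathrm{Nrd}$, $\eta_2\circ\mathrm{Nrd}$, $\eta_3\circ\mathrm{Nrd}$, so the $D^\times$-invariant trilinear form exists if and only if $\eta_1\eta_2\eta_3\chi_p=1$ on $\Q_p^\times$ (using surjectivity of $\mathrm{Nrd}\colon D^\times\to\Q_p^\times$). But $\eta_1\eta_2\eta_3\chi_p$ is ramified, since $\chi_p$ is and the $\eta_i$ are not; hence it is non-trivial and $W_p=+1$. Multiplying, $W(F,\chi)=W_\infty\cdot W_p\cdot\prod_{\ell\neq p}W_\ell=(-1)(+1)(1)=-1$, as desired.

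The step I expect to be most delicate is the local analysis at $p$: one has to match $L(F,\chi,s)$ --- defined in the excerpt through the Galois representation $V_\ell(F,\chi)$ --- with the automorphic triple product $L$-function of $(\pi_1\otimes\chi,\pi_2,\pi_3)$, using local--global compatibility for $\GL_2$ and the behaviour of local Langlands under the $8$-dimensional tensor construction; keep the unitary and arithmetic normalisations consistent, so that $W(F,\chi)$ is genuinely $\prod_v\epsilon(\tfrac12,\cdot,\psi_v)$; and invoke Prasad's theorem in the precise generality needed for ramified twists of Steinberg representations. The archimedean computation, though elementary, also needs to be done carefully --- especially the decomposition of $\phi_2^{\otimes 3}$ --- but reassuringly only the values $i^2$ and $i^4$ enter, and these do not depend on the residual sign conventions.
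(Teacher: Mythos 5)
Your proposal is correct and arrives at the same local root numbers ($W_\ell = +1$ for $\ell \neq p$, $W_\infty = -1$, $W_p = +1$), but the decisive computation at $p$ proceeds by a genuinely different route. The paper works entirely on the Galois side: it writes down the $8$-dimensional Weil--Deligne representation $\sigma'_{F,\chi,p} = \chi_p\lambda\omega_p^{-3}\otimes\spe(2)^{\otimes 3}$, observes that ramification of $\chi_p$ forces $V^{I_p}=0$ so that $\delta(\sigma'_{F,\chi,p})=1$ and the monodromy operator contributes nothing to the epsilon factor, decomposes the Weil representation as a sum of eight twists $\chi_p\lambda\omega_p^{-k}$, and then evaluates $\epsilon(\chi_p,\psi_p,\d x_p)^8$ via the Gauss-sum identity $\epsilon(\mu,\psi,\d x)\epsilon(\mu^{-1},\psi,\d x)=q\mu(-1)$ to get a positive real number, hence $W_p=+1$. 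You instead pass to the automorphic side, identify the local components as twisted Steinberg representations $\big(\mathrm{St}(\eta_1\chi_p),\mathrm{St}(\eta_2),\mathrm{St}(\eta_3)\big)$, and invoke Prasad's dichotomy for trilinear forms: since the Jacquet--Langlands transfers are the characters $(\eta_1\chi_p)\circ\mathrm{Nrd}$, $\eta_2\circ\mathrm{Nrd}$, $\eta_3\circ\mathrm{Nrd}$, the $D^\times$-invariant form vanishes precisely because $\eta_1\eta_2\eta_3\chi_p$ is ramified (hence nontrivial), giving $\epsilon_p=+1$. At $\infty$ you also compute the $L$-parameter decomposition $\phi_2^{\otimes 3}\cong\phi_4\oplus\phi_2^{\oplus 3}$ explicitly, where the paper simply cites Gross--Kudla for $W_\infty=-1$ and notes that finite-order twists do not change the Hodge structure.

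Both approaches are valid. The paper's is self-contained and elementary (linear algebra on Weil--Deligne representations plus classical Gauss sums), and it delivers as a byproduct the conductor $\cond(F,\chi)=p^8$ and the exact value of the local epsilon factor, not merely its sign. Yours is more conceptual and scales better (it would readily handle higher-conductor twists or situations where the eigenforms have different levels), but it requires invoking Prasad's theorem in exact generality for ramified twists of Steinberg and requires matching the Galois-theoretic $W_p$ of \eqref{def:grn} with the automorphic triple-product epsilon factor via local--global compatibility and the compatibility of local Langlands with tensor-product epsilon factors; you correctly flag these as the delicate points, and they are indeed the places where a fully detailed writeup would need care. One small remark on the central character hypothesis in Prasad: the three central characters here are $(\eta_1\chi_p)^2$, $\eta_2^2$, $\eta_3^2$, each trivial because $\eta_i$ and $\chi_p$ are quadratic, so the condition $\omega_{\pi_1}\omega_{\pi_2}\omega_{\pi_3}=1$ holds as you implicitly used.
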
 

The proof consists in showing that $W(F,\chi)=-1$, which boils down to computing the  epsilon factor of the $8$-dimensional Weil--Deligne representation of $F\otimes \chi$ at $p$. The main difference with the untwisted case lies in the fact that the epsilon factor of this Weil--Deligne representation at $p$ is equal to the epsilon factor of the Weil representation of $F\otimes \chi$ at $p$ (see \eqref{epsss} and Remark \ref{rem:end}).

Triple product $L$-functions for three newforms of weight $2$ and same square-free level were studied in detail by Gross--Kudla \cite{grosskudla}. In particular, they gave a formula for the global root number. In the case of prime level $p$, the global root number of $L(F,s)$ is $W(F)=a_p(f_1)a_p(f_2)a_p(f_3)$, where $a_p(f_i)$ denotes the $p$-th Fourier coefficient of $f_i$ at the cusp $\infty$. 
The twisted case considered in this note corresponds to the triple product of three newforms of levels $p, p, p^2$ (see Remark \ref{rem:psq}), and thus falls outside the scope of \cite{grosskudla}.

In the case of a single normalised cuspidal eigenform $f\in S_2(\Gamma_0(p))$, the global root numbers are $W(f)=a_p(f)$ and $W(f, \chi)=-\chi(-1)$ \cite{pacetti, rohrlich94}.

\subsection{The Beilinson--Bloch--Kato conjectures}\label{s:BBK}
Let $Y$ be a smooth and proper variety over a number field $F$. Taken together, the Beilinson--Bloch and Bloch--Kato conjectures \cite{bloch, blochkato} predict that for each integer $i \geq 0$ and prime $\ell$, the $\ell$-adic \'etale Abel--Jacobi map \begin{equation}\label{eq:AJ}
    \AJ_{\ell}^{i} \colon \CH^{i}(Y)_0 \otimes \Q_\ell \longrightarrow H^1_f(F, H_{\et}^{2i-1}(Y_{\bar{F}},\Q_\ell(i)))
    \end{equation}
from the null-homologous Chow group to the Bloch--Kato Selmer group is an isomorphism of $\Q_\ell$-vector spaces and moreover these spaces have dimension $\ord_{s = i} L(H^{2i-1}(Y),s)$,
the $L$-function being the one attached to the compatible system of $\ell$-adic $\Gal(\bar{F}/F)$-representations $H^{2i-1}_{\et}(Y_{\bar{F}}, \Q_\ell
)$. The Beilinson--Bloch--Kato conjectures are compatible with algebraic correspondences.

In the setting of this note, we expect from the Beilinson--Bloch--Kato conjectures that
\begin{equation}\label{BBK}
    \ord_{s = 2}L(F,\chi,s) = \dim_{K_F} t_F \CH^2(X_0(p)^3_K)_0^{\tau=-1}=\dim_{K_{F,\ell}} H^1_f(\Q, V_{F,\chi,\ell}(2)),
\end{equation}
where $K_{F,\ell}$ is the completion of the Hecke field $K_F$ of $F$ at a fixed prime above $\ell$, and the correspondence $t_F\in \CH^3(X_0(p)^6)_{K_F}$ is some choice of $K_F$-linear combination of
tensor products of Hecke correspondences projecting to the $1$-dimensional $F$-isotypic
component of the $(\T^{\otimes 3} \otimes \R)$-module $H^0(X_0(p)^3 , \Omega^3_{X_0(p)^3}) \otimes \R = H^0(X_0(p), \Omega^1_{X_0(p)})^{\otimes 3} \otimes \R$. Theorem \ref{thm:intro} implies that
\begin{equation}\label{coro1}
    \ord_{s = 2}L(F,\chi,s) \geq 1.
\end{equation}
In view of the Beilinson--Bloch--Kato conjectures, we thus expect to have the lower bound 
\begin{equation}\label{eqint}
    \dim_{K_F} t_F \CH^2(X_0(p)^3_K)_0^{\tau=-1}\geq 1.
\end{equation}

A natural question arises from \eqref{eqint} and Theorem \ref{thm:intro2}: when are $t_F\Xi$ and $\AJ_{\ell}^2(t_F\Xi)$ non-trivial? It would be interesting to know the answer to this, especially given the canonical nature of the Darmon--Rotger cycle (it does not depend on a choice of base-point nor on a projector to make it null-homologous, as opposed to the modified diagonal cycle in Remark \ref{rem:gks} below). At the moment, we have no answers to offer, only speculations.

In analogy with the situation for Heegner points \cite{GZ}, the Gross--Zagier philosophy might lead us to expect that 
\begin{equation}\label{gz}
t_F \Xi \neq 0 \iff \ord_{s=2} L(F,\chi,s)=1.
\end{equation}
In particular, the existence of one triple product of normalised cuspidal eigenforms such that $L'(F,\chi,2)\neq 0$ would be enough to imply that $\Xi$ has infinite order in $\CH^2(X_0(p)^3_K)_0$.

\begin{remark}\label{rem:gks}
Over $\Q$, a naturally occurring cycle is the modified diagonal cycle 
\[
\Delta_{\gks}(e):=P_{\gks}(e)(\Delta)\in \CH^2(X_0(p)^3)_0
\]
first considered in work of Gross--Kudla \cite{grosskudla} and Gross--Schoen \cite{grsc}. Here $e\in X_0(p)(\Q)$ is a fixed base-point and $P_{\gks}(e)$ is the Gross--Schoen projector (Definition \ref{def:pgks}) whose effect is to make the small diagonal $\Delta\subset X_0(p)^3$ null-homologous. When $\ord_{s=2} L(F,s)$ is even, $\AJ^2_{\ell}(t_F\Delta_{\gks}(e))$ is trivial for all $\ell$ and base-point $e$ \cite{lilienfeldtdiagonal} (hence $t_F\Delta_{\gks}(e)$ is trivial assuming injectivity of \eqref{eq:AJ}). When $\ord_{s=2} L(F,s)=1$, $t_F\Delta_{\gks}(\infty)$ is non-trivial by the height formula conjectured in \cite{grosskudla} and announced in \cite{yzz}. If in addition we have $\ord_{s=2} L(F,\chi,s)=1$, then  $\ord_{s=2} L(F/K,s)=2$, and it is conceivable in view of \eqref{gz} that
\[
t_F \CH^2(X_0(p)^3_K)_0= K_F \cdot t_F\Delta_{\gks}(\infty)\oplus K_F\cdot t_F \Xi,
\]
although this is pure speculation at the moment. 
\end{remark}

\subsection{Composite levels and higher weights}

Let $M\geq 1$ be an integer that is not divisible by $p$. Let $X(M,p)$ denote the proper modular curve over $\Q$ of level $\Gamma_1(M)\cap \Gamma_0(p)$ classifying isomorphism classes of pairs $(E,C)$ consisting of a generalized elliptic curve $E$ with $\Gamma_1(M)$-level structure and a cyclic subgroup $C$ of level $p$ (see Section \ref{s:mod}). Then one can define cycles $\Delta^{+}_{01}$ and $\Delta^-_{01}$ in $\CH^2(X(M,p)_K^3)$ by mimicking the definitions of the cycles $\Delta_+$ and $\Delta_-$ given in Section \ref{s:11}. This is in fact the level of generality in which these cycles appear in \cite[Lemma 4.1]{darmonrotgerast} (from where the notation $\Delta_{01}^{\pm}$ is borrowed). It seems possible to generalize Theorem \ref{thm:intro2} to this more general situation. Given normalized newforms $f_i \in S_2(\Gamma_0(M_i p))$ with $p\nmid M_i$ for $i=1,2,3$ and $M=\lcm(M_1,M_2,M_3)$, it appears likely that the techniques used in the proof of Theorem \ref{thm:intro} can be adapted to yield the local root number equality $W_p(f_1\otimes f_2\otimes f_3\otimes \chi)=1$. We have chosen to focus on the simplest case of prime level in this note, for which the global root number statement of Theorem \ref{thm:intro} is particularly enticing for the discussion in Section \ref{s:BBK}.

Let $\mathbf{f}_1, \mathbf{f}_2, \mathbf{f}_3$ be three $p$-adic Hida families of tame levels $M_1, M_2, M_3$ subject to the assumptions of \cite[p. 57]{darmonrotgerast}. As above, take $M=\lcm(M_1,M_2,M_3)$. Let $(x,y,z)$ be a crystalline point of the associated weight space of balanced weight $(k,\ell,m)=(k_0+2, \ell_0+2, m_0+2)$ with $k_0,\ell_0,m_0\geq 0$ such that $k_0+\ell_0+m_0=2r$ is even. Darmon and Rotger \cite[(2.27)]{darmonrotgerast} have constructed a {\it generalized Kato class} $\kappa:=\kappa_{\infty}(1,1,1;1)$ that is a three variable family of cohomology classes parametrized by points in weight space and taking values in the three-parameter family of self-dual Tate twists of the Galois representations attached to the different specializations of a triple of Hida families. When $(k_0, \ell_0, m_0)=(0,0,0)$, the specialization at $(x,y,z)$ of $\kappa$ interpolates the $p$-adic \'etale Abel--Jacobi images of the modified diagonal cycles of \cite[(2.14)]{darmonrotgerast}. When $(k_0, \ell_0, m_0)>(0,0,0)$, Darmon and Rotger \cite{DR1} have constructed generalized diagonal cycles $\Delta^{k_0,\ell_0,m_0}\in \CH^{r+2}(\mathcal{A}^{k_0}\times \mathcal{A}^{\ell_0}\times \mathcal{A}^{m_0})$ \cite[p. 62, l. 22]{darmonrotgerast}, where $\mathcal{A}^{n}$ denotes the $(n+1)$-dimensional Kuga--Sato variety fibered over $X_1(M)$ for any $n\geq 1$. After applying a projector to make them null-homologous, these cycles give rise to the {\it generalized Gross--Kudla--Schoen cycles} $\Delta_0^{k_0,\ell_0,m_0}\in \CH^{r+2}(\mathcal{A}^{k_0}\times \mathcal{A}^{\ell_0}\times \mathcal{A}^{m_0})_0$ of \cite[Definition 3.3]{DR1}. The $(\mathbf{f}_{1,x}, \mathbf{f}_{2,y}, \mathbf{f}_{3,z})$-isotypic component of the $(x,y,z)$-specialization of $\kappa$ is then the $p$-adic \'etale Abel--Jacobi image of $\Delta_0^{k_0,\ell_0,m_0}$ \cite[Theorem 4.1]{darmonrotgerast}. This identification enables the authors to link the global cohomology class $\kappa$ with an (unbalanced) Garrett--Hida triple product $p$-adic $L$-function associated to $\mathbf{f}_1, \mathbf{f}_2, \mathbf{f}_3$ via a reciprocity law \cite[Theorem 5.12]{darmonrotgerast} using the main theorem of \cite{DR1}. This reciprocity law is a key ingredient in the proof of the main result of \cite{darmonrotgerast1}, which pertains to the equivariant Birch and Swinnerton-Dyer conjecture at special points of weight $(2,1,1)$.  

As an intermediate step in their proof of \cite[Theorem 4.1]{darmonrotgerast}, Darmon and Rotger are led to consider a cycle $\Delta_{01}^{k_0,\ell_0,m_0} \in \CH^{r+2}(\mathcal{A}(M,p)^{k_0}\times \mathcal{A}(M,p)^{\ell_0}\times \mathcal{A}(M,p)^{m_0})$ \cite[p. 62, l. 25]{darmonrotgerast} fibered above the cycle $\Delta^+_{01}+\Delta^-_{01}\in \CH^2(X(M,p)^3)$. (Here, $\mathcal{A}(M,p)^n$ denotes the $(n+1)$-dimensional Kuga--Sato variety fibered over $X(M,p)$.) It seems possible, although this is not done in \cite{darmonrotgerast}, to define cycles $\Delta_{01, +}^{k_0,\ell_0,m_0}, \Delta_{01, -}^{k_0,\ell_0,m_0} \in \CH^{r+2}((\mathcal{A}(M,p)^{k_0}\times \mathcal{A}(M,p)^{\ell_0}\times \mathcal{A}(M,p)^{m_0})_K)$ fibered over $\Delta^{+}_{01}$ and $\Delta^{-}_{01}$ respectively and such that $\Delta_{01}^{k_0,\ell_0,m_0}=\Delta_{01, +}^{k_0,\ell_0,m_0}+\Delta_{01, -}^{k_0,\ell_0,m_0}$. One could then envision a notion of {\it generalized Darmon--Rotger cycle} defined as the difference $\Delta_{01, +}^{k_0,\ell_0,m_0}-\Delta_{01, -}^{k_0,\ell_0,m_0}$. For the future, it would be worthwhile to study these cycles in depth in the spirit of \cite{DR1,DR2}. In particular, the $p$-adic syntomic Abel--Jacobi images of such cycles could turn out to encode significant arithmetic properties; the ambient varieties have bad reduction at $p$ in this case, so such Abel--Jacobi calculations would be challenging and of independent interest as they appear to lie beyond the scope of the techniques developed in \cite{BLZ} and employed in \cite{DR1,DR2}.  

\subsection{Conventions}\label{not:emb}
By default, all Chow groups are with $\Q$-coefficients.
All number fields are viewed as embedded in a fixed algebraic closure $\bar{\Q}$ of $\Q$. Moreover, we fix a complex embedding $\bar{\Q}\hookrightarrow \C$, as well as $p$-adic embeddings $\bar{\Q}\hookrightarrow \C_p$ for each rational prime $p$. 

\subsection{Outline}
Section \ref{s:construction} describes the Darmon--Rotger cycle and contains the proof of Theorem \ref{thm:intro2}. Section \ref{s:WD} provides the necessary background on Weil--Deligne representations and local factors needed for the proof of Theorem \ref{thm:intro}, which is given in Section \ref{s:trip}.

\section{Triple product cycles}\label{s:construction}

Let $p\geq 5$ be a prime number. 
We give a description of the Darmon--Rotger cycle using certain maps $X(p)\lra X_1(p)^3\lra X_0(p)^3$ between various modular curves. We use this description to prove Theorem \ref{thm:intro2}. 

\subsection{Modular curves}\label{s:mod}

We begin by defining the relevant modular curves; see \cite{delignerap,diamondim,katzmazur} for details.

In the notation of \cite[\S 1.4]{katz}, let $\bar{M}_p$ denote the fine moduli scheme representing isomorphisms classes of pairs $(E, \alpha_p)$ consisting of a generalized elliptic curve $E$ over a $\Q$-scheme $S$ together with a full level $p$ structure \cite[(3.1)]{katzmazur}, i.e., an isomorphism of $S$-group schemes $\alpha_p : (\Z/p\Z)^2_S \overset{\sim}{\lra} E[p]$. We often write $\alpha_p=(P,Q)$ where $P=\alpha_p((1,0))\in E[p](S)$ and $Q=\alpha_p((0,1))\in E[p](S)$. The scheme $\bar{M}_p$ is a smooth proper curve over $\Q$, whose base change to $\Q(\zeta_p)$ is the disjoint union of $p-1$ geometrically connected smooth proper curves $X^j(p)$ with $j\in \{ 1, \ldots, p-1 \}$. The curve $X^j(p)$ classifies pairs $(E, (P, Q))$ consisting of a generalized elliptic curve $E$ over a $\Q(\zeta_p)$-scheme $S$ and a basis $(P, Q)$ of $E[p](S)$ satisfying $e_p(P, Q)=\zeta_p^j$, where $e_p$ denotes the Weil pairing. We shall focus on the first component $X(p):=X^1(p)$ (because, as we will see in Section \ref{s:gal}, considering the other components does not yield additional cycles).

Let $X_1(p)$ denote the modular curve over $\Q$ of level $\Gamma_1(p)$, which is the fine moduli scheme representing isomorphism classes of pairs $(E,P)$ consisting of a generalized elliptic curve over a $\Q$-scheme $S$ together with a point $P\in E^{\mathrm{reg}}(S)$ of exact order $p$ with the property that for all geometric points $s\colon \spec(k)\lra S$ the image of the resulting immersion $(\Z/p\Z)_k \hookrightarrow E_k^{\mathrm{reg}}$ meets every component \cite[\S 9.3]{diamondim}. It is a geometrically connected smooth proper curve over $\Q$.

Any element $d\in (\Z/p\Z)^\times$ gives rise to an automorphism of $X_1(N)$ defined over $\Q$ via the diamond operator $\langle d\rangle$ mapping $(E,P)$ to $(E,dP)$. The quotient $X_0(p):=X_1(p)/(\Z/p\Z)^\times$ of $X_1(p)$ by this action is the modular curve over $\Q$ of level $\Gamma_0(p)$. It is a coarse moduli scheme representing isomorphisms classes of pairs $(E,C)$ consisting of a generalized elliptic curve over a $\Q$-scheme $S$ together with a finite flat subgroup scheme $C\subset E^{\mathrm{reg}}$ with the property that for all geometric points $s\colon \spec(k)\lra S$ the fiber $C_k$ is a cyclic group of order $p$ whose image $C_k \hookrightarrow E_k^{\mathrm{reg}}$ meets every component \cite[\S 8.2]{diamondim}. The curve $X_0(p)$ is a geometrically connected smooth proper curve over $\Q$. In the rest of the paper, we will assume that $X_0(p)$ has positive genus, i.e., $p\not\in \{ 2,3,5,7,13\}$.

\subsection{Cycles on $X_1(p)^3$}\label{s2.1}

Let $x_i=(a_i, b_i)\in \F_p^2\setminus \{(0, 0)\}$ with $i\in \{1, 2, 3 \}$ and consider the map 
\[
\widetilde{\varphi}_{(x_1, x_2, x_3)} \colon \bar{M}_p \lra X_1(p)^3
\]
defined over $\Q$ by
\begin{equation*}
 (E, (P, Q))\mapsto ((E, a_1P+b_1Q), (E, a_2P+b_2Q), (E, a_3P+b_3Q)).
\end{equation*}
Denote by 
\[
\widetilde{\Delta}_{(x_1, x_2, x_3)}:=\widetilde{\varphi}_{(x_1, x_2, x_3)}(X(p))\in \CH^2(X_1(p)_{\Q(\zeta_p)}^3)
\]
the image of $X(p)$ under this map. 
We have a collection 
\[
\widetilde{\mathcal{C}}:=\left\{ \widetilde{\Delta}_{(x_1, x_2, x_3)} : (x_1, x_2, x_3)\in (\F_p^2\setminus \{(0, 0)\})^3 \right\} \subset \CH^2(X_1(p)_{\Q(\zeta_p)}^3)
\]
that inherits from $X(p)$ and $X_1(p)^3$ various actions of groups, which we will now define and study.

\subsubsection{Action of the group $\SL_2(\F_p)$}

There is a natural left action of the group $\SL_2(\F_p)$ on $X(p)$, as can be seen, using the moduli interpretation, as follows: if $\left(\begin{smallmatrix} \alpha & \beta \\ \gamma & \delta \end{smallmatrix}\right)\in \SL_2(\F_p)$, then 
\[
\left(\begin{matrix} \alpha & \beta \\ \gamma & \delta \end{matrix}\right) \cdot (E, (P, Q)):=(E, (\alpha P + \beta Q, \gamma P+\delta Q)). 
\]
Because the determinant is one, the Weil pairing on the basis is preserved. The above action naturally induces a right action of $\SL_2(\F_p)$ on the set $\widetilde{\cC}$ via 
\[
\widetilde{\Delta}_{x_1,x_2, x_3}\cdot \kappa := \widetilde{\varphi}_{(x_1, x_2, x_3)}\circ \kappa (X(p)),
\]
but since $\SL_2(\F_p)$ acts by automorphisms this action is the trivial one. A quick calculation reveals that 
\[ 
\widetilde{\Delta}_{(x_1, x_2, x_3)}\cdot \kappa=\widetilde{\Delta}_{(x_1, x_2, x_3)\cdot \kappa}
\] 
where the right action of $\SL_2(\F_p)$ on the set $(\F_p^2\setminus \{(0, 0)\})^3$ is defined as follows. Let $\kappa=\left(\begin{smallmatrix} \alpha & \beta \\ \gamma & \delta \end{smallmatrix}\right)\in \SL_2(\F_p)$ and $(x_1, x_2, x_3)\in (\F_p^2\setminus \{(0, 0)\})^3$ with $x_i=(a_i, b_i)$, $i=1,2,3$, then write the vector $(x_1, x_2, x_3)$ as a $3\times 2$ matrix and multiply on the right by $\kappa$: 
\begin{align*}
(x_1, x_2, x_3)\cdot \kappa : & = 
\left(\begin{matrix} a_1 & b_1 \\ a_2 & b_2 \\ a_3 & b_3 \end{matrix}\right)
\left(\begin{matrix} \alpha & \beta \\ \gamma & \delta \end{matrix}\right) \\
& =((a_1\alpha + b_1\gamma, a_1 \beta+b_1\delta), (a_2\alpha + b_2\gamma, a_2 \beta+b_2\delta), (a_3\alpha + b_3\gamma, a_3 \beta+b_3\delta)).
\end{align*}

It follows that the indexing set of the cycles can be taken to be 
\[ 
\widetilde{I}:=(\F_p^2\setminus \{(0, 0)\})^3/\SL_2(\F_p).
\] 
We shall write $[x_1, x_2, x_3]$ for the image of $(x_1, x_2, x_3)$ in $\widetilde{I}$. 
To understand the set $\widetilde{I}$ we introduce a determinant map 
\[ 
\Det : \widetilde{I} \lra (\F_p)^3
\] defined as follows. If $(x_1, x_2, x_3)$ is a representative of a class in $\widetilde{I}$ with $x_i=(a_i, b_i)$ for $i\in \{1,2,3\}$, then 
\[
\Det([x_1, x_2, x_3]):=\left(  
\left\vert\begin{matrix} a_2 & b_2 \\ a_3 & b_3 \end{matrix}\right\vert,
\left\vert\begin{matrix} a_3 & b_3 \\ a_1 & b_1 \end{matrix}\right\vert,
\left\vert\begin{matrix} a_1 & b_1 \\ a_2 & b_2 \end{matrix}\right\vert
\right).
\]
This map is well-defined as follows from the definition of the action of $\SL_2(\F_p)$. 
It is a bijection from the subset $\widetilde{I}^\times:=\Det^{-1}((\F_p^\times)^3)\subset \widetilde{I}$ to $(\F_p^\times)^3$. If $[x_1, x_2, x_3]\in \widetilde{I}^\times$ with $\Det([x_1, x_2, x_3])=(a, b, c)$, then we write $\widetilde{\Delta}_{a, b, c}:=\widetilde{\Delta}_{(x_1, x_2, x_3)}$. We restrict our attention to the subcollection 
\[
\widetilde{\mathcal{C}}^\times:=\left\{ \widetilde{\Delta}_{a,b,c} : (a,b,c)\in (\F_p^\times)^3 \right\} \subset \CH^2(X_1(p)_{\Q(\zeta_p)}^3).
\]

\subsubsection{Action of the diamond operators}

The modular curve $X_1(p)$ carries a natural left action of the group $\F_p^\times$ defined over $\Q$ via the so-called diamond operators. If $d\in \F_p^\times$, then in terms of the modular description we have 
$\langle d\rangle \cdot (E, P)=(E, dP).$
We get an induced action of $(\F_p^\times)^3$ on the triple product $X_1(p)^3$ described by 
\[
\langle d_1, d_2, d_3\rangle\cdot ((E_1, P_1), (E_2, P_2), (E_3, P_3))=((E_1, d_1P_1), (E_2, d_2P_2), (E_3, d_3P_3)). 
\]
This in turn induces a left action of $(\F_p^\times)^3$ on the collection of cycles $\widetilde{\mathcal{C}}$ via 
\[
\langle d_1, d_2, d_3\rangle \cdot \widetilde{\Delta}_{(x_1, x_2, x_3)}:= \langle d_1, d_2, d_3\rangle\circ \widetilde{\varphi}_{(x_1, x_2, x_3)}(X(p)),
\] 
and this action preserves the subcollection $\widetilde{\mathcal{C}}^\times$ since
\[
\langle d_1, d_2, d_3\rangle \cdot \widetilde{\Delta}_{a, b, c}=\widetilde{\Delta}_{d_2d_3a, d_1d_3b, d_1d_2c}. 
\]
The following lemma follows easily from this formula.
\begin{lemma}\label{coro3}
We have
\[
\orb_{\diamond}(\widetilde{\Delta}_{1,1,1})=\left\{ \widetilde{\Delta}_{a,b,c} \: \vert \: a, b, c \in \F_p^\times, \: abc\in (\F_p^\times)^{(2)} \right\}.
\]
Here $(\F_p^\times)^{(2)}$ denotes the set of quadratic residues modulo $p$ and thus the orbit of $\widetilde{\Delta}_{1,1,1}$ has size $\frac{(p-1)^3}{2}$. The stabiliser of $\widetilde{\Delta}_{1,1,1}$ for this action is given by $\{ \langle 1, 1, 1\rangle, \langle-1, -1, -1\rangle \}$.
 As a consequence, there are $2$ orbits for the action of the diamond operators on $\widetilde{\mathcal{C}}^\times$:  
 \[
 \widetilde{\mathcal{C}}^\times=\orb_{\diamond}(\widetilde{\Delta}_{1,1,1})\sqcup \orb_{\diamond}(\widetilde{\Delta}_{1,1,a}),
 \]
 where $a\in \F_p^\times$ is a choice of a non-quadratic residue modulo $p$.
\end{lemma}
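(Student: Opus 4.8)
The plan is to reduce the whole statement to elementary combinatorics of the action of $(\F_p^\times)^3$ on the indexing set $(\F_p^\times)^3$ given by $\langle d_1,d_2,d_3\rangle\cdot(a,b,c)=(d_2d_3a,\,d_1d_3b,\,d_1d_2c)$, which is precisely the formula displayed immediately before the lemma. For this reduction to compute the orbit and stabiliser of the \emph{cycle} $\widetilde{\Delta}_{1,1,1}$, rather than merely of the index $(1,1,1)$, one needs the parametrisation $(a,b,c)\mapsto\widetilde{\Delta}_{a,b,c}$ to be injective; this is the only input that is not pure arithmetic in $\F_p$. I would prove it from the moduli description: a point in the dense open locus of $\widetilde{\Delta}_{a,b,c}$ has the shape $((E,R_1),(E,R_2),(E,R_3))$ with $R_i=a_iP+b_iQ$ for a basis $(P,Q)$ satisfying $e_p(P,Q)=\zeta_p$, so bilinearity of the Weil pairing yields $e_p(R_2,R_3)=\zeta_p^{a}$, $e_p(R_3,R_1)=\zeta_p^{b}$, $e_p(R_1,R_2)=\zeta_p^{c}$; thus $(a,b,c)$ is recovered intrinsically from the cycle, so distinct triples give distinct cycles.

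Granting this, I would first determine the orbit of $(1,1,1)$. The class of $abc$ in $\F_p^\times/(\F_p^\times)^{(2)}$ is an invariant of the action, since $(d_2d_3a)(d_1d_3b)(d_1d_2c)=(d_1d_2d_3)^2abc$; hence every tuple in the orbit of $(1,1,1)$ has square product. Conversely, given $(a,b,c)$ with $abc\in(\F_p^\times)^{(2)}$, I would solve $d_2d_3=a$, $d_1d_3=b$, $d_1d_2=c$ directly: the system forces $d_1^2=bc/a=abc/a^2$, which is a square, so one picks such a $d_1$ and sets $d_2=c/d_1$, $d_3=b/d_1$, and checks $d_2d_3=bc/d_1^2=a$. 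This gives $\orb_{\diamond}(\widetilde{\Delta}_{1,1,1})=\{\widetilde{\Delta}_{a,b,c}:abc\in(\F_p^\times)^{(2)}\}$, whose cardinality is $(p-1)^3/2$ by a direct count of tuples with square product.

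For the stabiliser, the equations $d_2d_3=d_1d_3=d_1d_2=1$ force $d_1=d_2=d_3$ and then $d_1^2=1$, leaving $\{\langle 1,1,1\rangle,\langle-1,-1,-1\rangle\}$, both of which visibly fix $(1,1,1)$; the orbit--stabiliser theorem then re-derives the orbit size $(p-1)^3/2$. Finally, since the invariant $abc\bmod(\F_p^\times)^{(2)}$ takes exactly two values, and the fibre over the non-square class is again a single orbit --- by the same solvability computation, now using that the ratio of two non-squares is a square --- the set $(\F_p^\times)^3$ is partitioned into exactly two orbits, the non-square one being represented by $(1,1,a)$ for any non-square $a$; transporting along the injection of the first paragraph yields $\widetilde{\mathcal{C}}^\times=\orb_{\diamond}(\widetilde{\Delta}_{1,1,1})\sqcup\orb_{\diamond}(\widetilde{\Delta}_{1,1,a})$. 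I expect the only step requiring genuine thought to be the injectivity observation; everything else is bookkeeping with the explicit action formula, so I anticipate no real obstacle.
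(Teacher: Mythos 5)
Your proposal is correct, and it takes the same elementary route that the paper implicitly relies on: the paper simply remarks that the lemma ``follows easily'' from the displayed formula $\langle d_1,d_2,d_3\rangle\cdot\widetilde{\Delta}_{a,b,c}=\widetilde{\Delta}_{d_2d_3a,\,d_1d_3b,\,d_1d_2c}$, and your computation of the orbit via the invariant $abc\bmod(\F_p^\times)^{(2)}$, the solvability of $d_2d_3=a$, $d_1d_3=b$, $d_1d_2=c$, the stabiliser $\{\pm\langle1,1,1\rangle\}$, and the orbit--stabiliser count $(p-1)^3/2$ is exactly the bookkeeping the paper leaves to the reader. The one thing you add that the paper genuinely glosses over is the injectivity of $(a,b,c)\mapsto\widetilde{\Delta}_{a,b,c}$, which you establish by observing that the Weil-pairing values $e_p(R_2,R_3)$, $e_p(R_3,R_1)$, $e_p(R_1,R_2)$ at a generic point recover $(\zeta_p^a,\zeta_p^b,\zeta_p^c)$; this is a worthwhile point to make explicit, since the claims about orbit sizes and stabilisers are about the cycles themselves and would not follow from the index arithmetic alone if distinct triples could yield the same cycle.
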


\subsubsection{Action of the Galois group $\Gal(\Q(\zeta_p)/\Q)$}\label{s:gal}

We identify $\Gal(\Q(\zeta_p)/\Q)$ with $\F_p^\times$ so that the element of the Galois group $\sigma_i$ indexed by $i\in \F_p^\times$ raises $\zeta_p$ to the $i$-th power. 
Recall that the curve $\bar{M}_p$ is defined over $\Q$. When base-changed to $\Q(\zeta_p)$, the Galois group of $\Q(\zeta_p)$ permutes the $p-1$ connected components $X^j(p)$ of this curve transitively. Using this, $\Gal(\Q(\zeta_p)/\Q)$ acts on $\widetilde{\cC}$ via 
\[
\widetilde{\Delta}_{(x_1,x_2,x_3)}^{\sigma_i} := \widetilde{\varphi}_{(x_1, x_2, x_3)}(\sigma_i (X(p)))=\widetilde{\varphi}_{(x_1, x_2, x_3)}(X^i(p)).
\]
It is then not difficult to prove the following:
\begin{lemma}\label{galaction}
For all $i\in \F_p^\times$ and $(a,b,c)\in (\F_p^\times)^3$, we have
$
\widetilde{\Delta}_{a, b, c}^{\sigma_i}=\widetilde{\Delta}_{ia, ib, ic}.
$
\end{lemma}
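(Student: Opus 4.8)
The plan is to reduce the statement to a comparison of two of the morphisms $\widetilde{\varphi}$, by transporting the component $X^i(p)$ back to $X(p)=X^1(p)$ along an explicit isomorphism and tracking the effect on the $\Det$-indexing. Fix $i\in\F_p^\times$ and a representative $(x_1,x_2,x_3)$, with $x_j=(a_j,b_j)$, of the class in $\widetilde{I}^\times$ with $\Det([x_1,x_2,x_3])=(a,b,c)$, so that $\widetilde{\Delta}_{a,b,c}=\widetilde{\Delta}_{(x_1,x_2,x_3)}$. By the definition of the Galois action recalled above,
\[
\widetilde{\Delta}_{a,b,c}^{\sigma_i}=\widetilde{\varphi}_{(x_1,x_2,x_3)}\bigl(X^i(p)\bigr),
\]
so it suffices to rewrite the right-hand side as some $\widetilde{\Delta}_{a',b',c'}$.

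Next I would introduce the isomorphism of $\Q(\zeta_p)$-curves
\[
\iota_i\colon X(p)\overset{\sim}{\lra} X^i(p),\qquad (E,(P,Q))\longmapsto (E,(P,iQ)),
\]
which is well-defined on moduli because $e_p(P,iQ)=e_p(P,Q)^i$, so $e_p(P,Q)=\zeta_p$ forces $e_p(P,iQ)=\zeta_p^i$; its inverse sends $(E,(P,Q))$ to $(E,(P,i^{-1}Q))$. Unwinding the defining formula for $\widetilde{\varphi}$, the composite $\widetilde{\varphi}_{(x_1,x_2,x_3)}\circ\iota_i$ sends $(E,(P,Q))$ to $((E,a_1P+ib_1Q),(E,a_2P+ib_2Q),(E,a_3P+ib_3Q))$, which is exactly $\widetilde{\varphi}_{(x_1',x_2',x_3')}$ for $x_j':=(a_j,ib_j)\in\F_p^2\setminus\{(0,0)\}$. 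Hence
\[
\widetilde{\Delta}_{a,b,c}^{\sigma_i}=\widetilde{\varphi}_{(x_1,x_2,x_3)}\bigl(\iota_i(X(p))\bigr)=\widetilde{\varphi}_{(x_1',x_2',x_3')}(X(p))=\widetilde{\Delta}_{(x_1',x_2',x_3')}.
\]

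Finally I would identify $\widetilde{\Delta}_{(x_1',x_2',x_3')}$ inside $\widetilde{\mathcal{C}}^\times$: each of the three $2\times 2$ determinants computing $\Det([x_1',x_2',x_3'])$ has its second column scaled by $i$ relative to the corresponding determinant for $(x_1,x_2,x_3)$, so
\[
\Det([x_1',x_2',x_3'])=i\cdot\Det([x_1,x_2,x_3])=(ia,ib,ic),
\]
and in particular $[x_1',x_2',x_3']\in\widetilde{I}^\times$ since $i\in\F_p^\times$; thus $\widetilde{\Delta}_{(x_1',x_2',x_3')}=\widetilde{\Delta}_{ia,ib,ic}$, which combined with the previous display gives the lemma. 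The argument is essentially formal once the equality $\sigma_i(X(p))=X^i(p)$ is in hand, and that was already recorded in the discussion preceding the lemma; the only point meriting a moment's care is that $\widetilde{\varphi}_{(x_1,x_2,x_3)}$ is defined over $\Q$ and $\iota_i$ over $\Q(\zeta_p)$, which is what legitimises passing the Galois action through these morphisms. I do not anticipate a genuine obstacle here.
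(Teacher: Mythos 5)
Your proof is correct. The paper offers no proof of this lemma---it is stated with only "It is then not difficult to prove the following"---so there is nothing to compare against; the argument you give, pulling $X^i(p)$ back to $X(p)$ via the determinant-$i$ matrix $\bigl(\begin{smallmatrix}1&0\\0&i\end{smallmatrix}\bigr)$ acting on $\bar{M}_p$ and observing that this rescales the second column of each $2\times 2$ minor (hence each entry of $\Det$) by $i$, is the natural one and surely what the author intended.
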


\subsection{Cycles on $X_0(p)^3$}\label{S:X0}

There is a natural degree $(p-1)/2$ covering of curves $\pi : X_1(p)\lra X_0(p)$ defined over $\Q$ given by mapping $(E, P)$ to $(E, \langle P\rangle)$. It gives rise to a map on triple products $\pi^3 : X_1(p)^3\lra X_0(p)^3$. Define, for $(x_1, x_2, x_3)\in ((\F_p\times \F_p)\setminus \{ (0, 0) \})^3$, the map 
\[
\varphi_{(x_1, x_2, x_3)}:=\pi^3 \circ \widetilde{\varphi}_{(x_1, x_2, x_3)} : \bar{M}_p \lra X_0(p)^3,
\]
as well as the cycle
\[ 
\Delta_{(x_1, x_2, x_3)}:=\varphi_{(x_1, x_2, x_3)}(X(p))\in \CH^2(X_0(p)_{\Q(\zeta_p)}^3).
\]  
The cycles $\Delta_{(x_1, x_2, x_3)}$ are invariant under the action of the diamond operators on the triples $(x_1, x_2, x_3)$. By taking images under $\pi^3$ of the cycles in $\widetilde{\mathcal{C}}^\times$ indexed by the set $\widetilde{I}^{\times}$, we obtain a collection of cycles $\mathcal{C}^\times$ in $\CH^2(X_0(p)_{\Q(\zeta_p)}^3)$ indexed by the double coset space $(\F_p^\times)^3 \backslash \tilde{I}^\times$, which has cardinality $2$ by Lemma \ref{coro3}. The $2$ cycles are the schematic closures of: 
\begin{itemize}
\item $\Delta_+ :=\pi^3(\widetilde{\Delta}_{1, 1, 1})=\{ ((E, \langle P\rangle), (E, \langle Q\rangle), (E, \langle P+Q\rangle)) \}$
\item $\Delta_- :=\pi^3(\widetilde{\Delta}_{1, 1, a})=\{ ((E, \langle P\rangle), (E, \langle Q\rangle), (E, \langle aP+Q\rangle)) \}$ ($a$ is a non-quadratic residue).
\end{itemize} 
These cycles first appeared in work of Darmon--Rotger \cite[p. 30]{darmonrotgerast}.
It is clear that $\Delta_+$ and $\Delta_-$ match the descriptions given in the introduction.

\subsubsection{Field of definition}

\begin{lemma}\label{lem:fodK}
The cycles $\Delta_+$ and $\Delta_-$ are defined over the quadratic field 
\[ 
K:=\Q\left(\sqrt{\chi(-1)p}\right)\subset \Q(\zeta_p),
\] 
where $\chi$ denotes the Legendre symbol modulo $p$. The non-trivial element $\tau$ of $\Gal(K/\Q)$ interchanges $\Delta_+$ and $\Delta_-$. 
\end{lemma}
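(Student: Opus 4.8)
The plan is to track the field of definition through the Galois action computed in Lemma \ref{galaction}, after transporting it from the level of $X_1(p)^3$ down to $X_0(p)^3$. First I would recall that $\Delta_+ = \pi^3(\widetilde{\Delta}_{1,1,1})$ and $\Delta_- = \pi^3(\widetilde{\Delta}_{1,1,a})$ for $a$ a non-quadratic residue, and that by the definition of the cycles $\Delta_{(x_1,x_2,x_3)}$ on $X_0(p)^3$, only the diamond-orbit of $(x_1,x_2,x_3)$ matters; by Lemma \ref{coro3} the diamond orbit of $\widetilde{\Delta}_{a,b,c}$ is determined precisely by whether $abc$ is a quadratic residue, so $\pi^3(\widetilde{\Delta}_{a,b,c}) = \Delta_+$ if $abc \in (\F_p^\times)^{(2)}$ and $\Delta_-$ otherwise. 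This dictionary is the key bookkeeping device.

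Next I would apply Lemma \ref{galaction}: for $\sigma_i \in \Gal(\Q(\zeta_p)/\Q)$ we have $\widetilde{\Delta}_{1,1,1}^{\sigma_i} = \widetilde{\Delta}_{i,i,i}$, hence pushing forward by $\pi^3$,
\[
\Delta_+^{\sigma_i} = \pi^3(\widetilde{\Delta}_{i,i,i}) = \begin{cases} \Delta_+ & \text{if } i^3 \in (\F_p^\times)^{(2)}, \\ \Delta_- & \text{if } i^3 \notin (\F_p^\times)^{(2)}. \end{cases}
\]
Since $i^3$ is a quadratic residue if and only if $i$ is (as $i^3 = i \cdot i^2$), the stabiliser of $\Delta_+$ in $\Gal(\Q(\zeta_p)/\Q) \cong \F_p^\times$ is exactly the subgroup $(\F_p^\times)^{(2)}$ of squares, which has index $2$. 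By Galois theory, $\Delta_+$ (and likewise $\Delta_-$, whose stabiliser is the same subgroup via the analogous computation $\Delta_-^{\sigma_i} = \pi^3(\widetilde{\Delta}_{i,i,ia})$ with $i^3 a$ a QR iff $i$ is a QR, using that $a$ is a non-residue) is defined over the unique quadratic subfield of $\Q(\zeta_p)$ fixed by the squares, and $\sigma_i$ with $i$ a non-residue interchanges $\Delta_+$ and $\Delta_-$. Finally I would identify that quadratic subfield: it is the fixed field of $(\F_p^\times)^{(2)}$, which classically is $\Q(\sqrt{p^\ast})$ with $p^\ast = (-1)^{(p-1)/2} p = \chi(-1) p$, matching the claimed $K = \Q(\sqrt{\chi(-1)p})$.

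The only genuinely non-routine point is making sure the pushforward statement $\Delta_+^{\sigma_i} = \pi^3(\widetilde{\varphi}_{(x_1,x_2,x_3)}(\sigma_i(X(p))))$ is legitimate, i.e. that the Galois action on the cycle $\Delta_+ \subset X_0(p)^3_{\Q(\zeta_p)}$ is correctly computed by the action on the connected components of $\bar M_p$ as set up in Section \ref{s:gal} — this is because $\pi^3$ and the maps $\widetilde{\varphi}$ are all defined over $\Q$, so Galois commutes with taking images, and the identity $\pi^3 \circ \widetilde{\varphi}_{(x_1,x_2,x_3)} = \varphi_{(x_1,x_2,x_3)}$ is an equality of $\Q$-morphisms. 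Granting that compatibility (already implicit in the constructions of Section \ref{S:X0}), everything reduces to the elementary fact that cubing is a bijection on $\F_p^\times/(\F_p^\times)^{(2)}$, so there is no serious obstacle; the proof is essentially a matching of orbit-stabiliser data with the classical description of the quadratic subfield of $\Q(\zeta_p)$.
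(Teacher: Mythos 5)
Your proposal is correct and follows essentially the same approach as the paper's own proof: both apply Lemma~\ref{galaction} and Lemma~\ref{coro3} to show the stabiliser of $\Delta_{\pm}$ in $\Gal(\Q(\zeta_p)/\Q)\cong\F_p^\times$ is $(\F_p^\times)^{(2)}$, and then identify the fixed field as $K=\Q(\sqrt{\chi(-1)p})$. The only cosmetic difference is that the paper establishes the identification of $K$ via an explicit Gauss-sum computation, whereas you cite the classical fact $\Q(\sqrt{p^*})\subset\Q(\zeta_p)$ directly; the substance is identical.
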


\begin{proof}
Let $G(\chi)$ denote the Gauss sum associated to $\chi$ given by the expression
\[
G(\chi):=\sum_{n=0}^{p-1} \zeta_p^{n^2}.
\]
The equality $G(\chi)^2=\chi(-1)p$ goes back to Gauss and implies that $K$ is the quadratic subfield of the cyclotomic field $\Q(\zeta_p)$. We have 
\[
\sigma_i(G(\chi))=\sum_{n=0}^{p-1}\zeta_p^{in^2}=G(\chi) \iff i\in (\F_p^{\times})^{(2)}, 
\]
and as a consequence $\Gal(\Q(\zeta_p)/K)\simeq (\F_p^{\times})^{(2)}$ and $\Gal(K/\Q)\simeq \F_p^{\times}/(\F_p^{\times})^{(2)}$. Thus $\tau$ acts as $\sigma_a$ where $a\in \F_p^\times$ is not a square. It follows from Lemmas \ref{galaction} and \ref{coro3} that both cycles in $\cC^\times$ are fixed by $\Gal(\Q(\zeta_p)/K)$ and moreover that 
\[
\Delta_+^{\tau}=\pi^3(\widetilde{\Delta}_{1,1,1}^{\tau})=\pi^3(\widetilde{\Delta}_{a,a,a})=\pi^3(\widetilde{\Delta}_{1,1,a})=\Delta_-. 
\]
\end{proof}

\subsubsection{Action of the symmetric group $S_3$}

The symmetric group $S_3$ acts on $X_0(p)^3$ and $X_1(p)^3$ by permuting the coordinates. This induces a left action of $S_3$ on the various cycles given by 
\[
\sigma\cdot \Delta_{(x_1, x_2, x_3)}:=\sigma\circ \varphi_{x_1, x_2, x_3}(X(p))=\Delta_{(x_{\sigma(1)}, x_{\sigma(2)}, x_{\sigma(3)})}.
\]
(And similarly for cycles in $\widetilde{\mathcal{C}}$.)
Let $[x_1, x_2, x_3]\in \widetilde{I}^\times$ with determinant $(a, b, c)\in (\F_p^\times)^3$. For all $\sigma\in S_3$, observe that 
\[
\prod_{i=1}^3 \Det([x_{\sigma(1)}, x_{\sigma(2)}, x_{\sigma(3)}])_i=\sgn(\sigma) abc,
\]
where $\sgn(\sigma)$ is the sign of the permutation $\sigma$. The following result now follows from Lemma \ref{coro3}.
\begin{proposition}\label{lemma:S3}
If $p \equiv 1\pmod 4$, then the action of $S_3$ fixes $\Delta_+$ and $\Delta_-$. If $p \equiv 3\pmod 4$, then any transposition in $S_3$ permutes $\Delta_+$ and $\Delta_-$.
\end{proposition}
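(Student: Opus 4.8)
The plan is to deduce the statement from two ingredients already in place: the combinatorial identity $\prod_{i=1}^{3}\Det([x_{\sigma(1)},x_{\sigma(2)},x_{\sigma(3)}])_i=\sgn(\sigma)\,abc$ recorded just above the proposition, Lemma \ref{coro3}, and the elementary fact (Euler's criterion) that $-1\in(\F_p^\times)^{(2)}$ if and only if $p\equiv 1\pmod 4$. The only real care needed is to keep the cycles upstairs on $X_1(p)^3$, where the diamond action is nontrivial and each orbit has $(p-1)^3/2$ members, separate from the cycles downstairs on $X_0(p)^3$, where the diamond action is trivial and only the quadratic-residue class of the product of the determinants survives; I do not anticipate a genuine obstacle beyond this bookkeeping.

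First I would record the dictionary between the two families of cycles. Since $\Delta_{(x_1,x_2,x_3)}=\pi^3(\widetilde\Delta_{(x_1,x_2,x_3)})$ is invariant under the diamond operators acting on the index triples, and since by Lemma \ref{coro3} the diamond orbit of $\widetilde\Delta_{1,1,1}$ is exactly $\{\widetilde\Delta_{a,b,c}: abc\in(\F_p^\times)^{(2)}\}$ (the complementary orbit being the one with $abc\notin(\F_p^\times)^{(2)}$), pushing forward along $\pi^3$ gives, for any $[x_1,x_2,x_3]\in\widetilde{I}^\times$ with $\Det=(a,b,c)$,
\[
\pi^3(\widetilde\Delta_{a,b,c})=
\begin{cases}
\Delta_+ & \text{if } abc\in(\F_p^\times)^{(2)},\\
\Delta_- & \text{if } abc\notin(\F_p^\times)^{(2)}.
\end{cases}
\]
In other words, the image downstairs remembers only the residue class of the product of the three determinants.

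Next I would compute the effect of $\sigma\in S_3$. Because $\sigma$ commutes with $\pi^3$ (both merely permute or forget coordinates), one has $\sigma\cdot\Delta_\pm=\pi^3(\widetilde\Delta_{(x_{\sigma(1)},x_{\sigma(2)},x_{\sigma(3)})})$ for any representative $(x_1,x_2,x_3)$ of $\widetilde\Delta_{1,1,1}$, resp. $\widetilde\Delta_{1,1,a}$. Applying the determinant identity, the permuted triple has determinant-product $\sgn(\sigma)$, resp. $\sgn(\sigma)\,a$. By the dictionary above, $\sigma\cdot\Delta_+=\Delta_+$ exactly when $\sgn(\sigma)\in(\F_p^\times)^{(2)}$, and likewise $\sigma\cdot\Delta_-=\Delta_-$ exactly when $\sgn(\sigma)\,a\in(\F_p^\times)^{(2)}$, i.e.\ exactly when $\sgn(\sigma)\in(\F_p^\times)^{(2)}$ since $a$ is a non-residue.

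Finally I would conclude. For even $\sigma$ one has $\sgn(\sigma)=1\in(\F_p^\times)^{(2)}$ unconditionally, so $A_3$ fixes both $\Delta_+$ and $\Delta_-$ in all cases. For a transposition $\sgn(\sigma)=-1$, which lies in $(\F_p^\times)^{(2)}$ iff $p\equiv 1\pmod 4$. Hence if $p\equiv 1\pmod 4$ every transposition also fixes $\Delta_+$ and $\Delta_-$, so all of $S_3$ does; and if $p\equiv 3\pmod 4$ every transposition interchanges $\Delta_+$ and $\Delta_-$, which is the asserted statement.
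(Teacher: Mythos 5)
Your proof is correct and is essentially the argument the paper has in mind: the paper simply states that the proposition "follows from Lemma \ref{coro3}" after recording the determinant-sign identity, and your write-up spells out exactly that deduction (the dictionary between the quadratic-residue class of $\Det$ and $\Delta_\pm$, compatibility of the $S_3$-action with $\pi^3$, and Euler's criterion for $-1$). Nothing more needs to be said.
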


\subsection{Homological triviality}

We give two proofs, the first of which uses the Gross--Schoen projector \cite{grsc}.

\begin{definition}\label{def:pgks}
Let $C$ be a smooth projective geometrically connected curve over a number field $k$ and let $e$ be a $k$-rational point of $Y$. For any non-empty subset $T$ of $\{ 1, 2, 3 \}$, let $T'$ denote the complementary set. Write $p_T : C^3\lra C^{\vert T \vert}$ for the natural projection map and let $q_T(e) : C^{\vert T\vert}\lra C^3$ denote the inclusion obtained by filling in the missing coordinates using the point $e$. Let $P_{T}(e)$ denote the graph of the morphism $q_{T}(e)\circ p_T : C^3 \lra C^3$ viewed as a codimension $3$ cycle on the product $C^3\times C^3$. Define the Gross--Schoen projector by 
\[
P_{\gks}(e):=\sum_T (-1)^{\vert T'\vert} P_T(e) \in \CH^3(C^3\times C^3),
\]
where the sum is taken over all subsets of $\{ 1, 2, 3 \}$. This is an idempotent in the ring of correspondences of $C^3$ by \cite[Proposition 2.3]{grsc} with the property that it annihilates the cohomology groups $H^{i}(C^3(\C), \Z)$ for $i\in \{ 4,5,6 \}$ and maps $H^3(C^3(\C), \Z)$ onto the K\"unneth summand $H^1(C(\C), \Z)^{\otimes 3}$ by \cite[Corollary 2.6]{grsc}.
\end{definition}

Consider the Gross--Schoen projector on $X_0(p)^3$, with base-point some rational point $e\in X_0(p)(\Q)$, e.g., take $e$ to be one of the two cusps $\infty$ and $0$ of $X_0(p)$.\footnote{Mazur \cite[Theorem 1]{mazur} proved that $X_0(p)(\Q) = \{\infty, 0 \}$ whenever $p \not\in \{37, 43, 67, 163\}$ and the genus of $X_0(p)$ is greater than $1$. Moreover, the modular curve $X_0(37)$ has two non-cuspidal $\Q$-rational points, while $X_0(p)$ has a
unique non-cuspidal $\Q$-rational point when $p \in \{43, 67, 163\}$.} This idempotent correspondence acts on cohomology and annihilates $H^4(X_0(p)^3(\C), \Z)$, the target of the Betti cycle class map $\cl^2_B \colon \CH^2(X_0(p)^3_{\bar{\Q}}) \lra H^4(X_0(p)^3(\C), \Z)$. Hence, for any cycle $Z\in \CH^2(X_0(p)^3_{\bar{\Q}})$, the cycle $P_{\gks}(e)_*(Z)$ is null-homologous and belongs to $\CH^2(X_0(p)^3_{\bar{\Q}})_0$.

\begin{theorem}\label{prop:xihom}
The Darmon--Rotger cycle $\Xi=\Delta_+ - \Delta_-$ satisfies the equality $\Xi=P_{\gks}(e)_*(\Xi)$ for any base-point $e\in X_0(\Q)$. In particular, it is null-homologous. 
\end{theorem}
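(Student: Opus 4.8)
The plan is to show that the Darmon–Rotger cycle $\Xi$ lies in the image of the Gross–Schoen projector $P_{\gks}(e)$, and then invoke the cohomological properties of this projector recorded in Definition \ref{def:pgks}. Concretely, since $P_{\gks}(e)$ annihilates $H^4(X_0(p)^3(\C),\Z)$, which is the target of the Betti cycle class map on codimension-$2$ cycles, any cycle of the form $P_{\gks}(e)_*(Z)$ is automatically null-homologous; so it suffices to prove the \emph{identity} $\Xi = P_{\gks}(e)_*(\Xi)$ in $\CH^2(X_0(p)^3_{\bar\Q})$.

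\textbf{Reduction to the complementary terms.} Write $P_{\gks}(e) = \sum_T (-1)^{|T'|} P_T(e)$, where $P_{\{1,2,3\}}(e)$ is the graph of the identity, so $P_{\{1,2,3\}}(e)_*(\Xi) = \Xi$. Thus the claim $\Xi = P_{\gks}(e)_*(\Xi)$ is equivalent to the vanishing
\[
\sum_{\emptyset \neq T \subsetneq \{1,2,3\}} (-1)^{|T'|} \, P_T(e)_*(\Xi) = 0
\]
in $\CH^2(X_0(p)^3_{\bar\Q})$ (the $T = \emptyset$ term contributes a multiple of the class of the point $(e,e,e)$ and cancels between $\Delta_+$ and $\Delta_-$, since both curves have the same degree over each factor). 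The key step is therefore to compute each $P_T(e)_*(\Delta_\pm)$ for $T$ a singleton and for $T$ of size two, and to check that the alternating sum of these contributions agrees for $\Delta_+$ and $\Delta_-$, so that it cancels in the difference $\Xi$.

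\textbf{Computing the push-forwards.} For $T = \{i\}$ a singleton, $P_T(e)_*$ sends a curve $Z \subset X_0(p)^3$ to $q_T(e)_* p_{T,*}(Z)$, which is $(\deg_i Z)$ times the "coordinate line" $\{e\} \times \cdots \times X_0(p) \times \cdots \times \{e\}$ in the $i$-th slot, where $\deg_i Z$ is the degree of the $i$-th projection restricted to $Z$. From the moduli description of $\Delta_+$ and $\Delta_-$ as images of $X(p)$, this degree is the same for both cycles and for all $i$ (it equals $|\SL_2(\F_p)|/|\text{stabiliser}|$ divided appropriately by the degree of $\pi^3$, and in any case is manifestly permutation- and $\Delta_\pm$-symmetric). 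Hence the singleton contributions to $P_{\gks}(e)_*(\Delta_+)$ and $P_{\gks}(e)_*(\Delta_-)$ coincide and drop out of $\Xi$. Similarly, for $T = \{i,j\}$ of size two, $P_T(e)_*(\Delta_\pm) = q_T(e)_* p_{T,*}(\Delta_\pm)$ is supported on the surface $X_0(p)^2 \times \{e\}$ (up to reordering) and is the push-forward of the image of the two-coordinate projection of $\Delta_\pm$; one checks that this image curve, together with its multiplicity, is again independent of the choice $\Delta_+$ versus $\Delta_-$, essentially because forgetting one of the three subgroups $C_1, C_2, C_3$ destroys the distinction encoded by the invariant $o(E; C_1, C_2, C_3)$. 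Thus all the terms $P_T(e)_*$ with $\emptyset \neq T \subsetneq \{1,2,3\}$ cancel in $\Xi = \Delta_+ - \Delta_-$, leaving $P_{\gks}(e)_*(\Xi) = \Xi$.

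\textbf{The main obstacle.} The crux is the two-coordinate case: one must show precisely that $p_{\{i,j\},*}(\Delta_+) = p_{\{i,j\},*}(\Delta_-)$ as cycles on $X_0(p)^2$ (with multiplicity), i.e. that the "pair of subgroups" curves obtained from $\Delta_+$ and $\Delta_-$ by forgetting the third factor are rationally — indeed, already as subvarieties — equal. This should follow cleanly from the alternative description via $X(p)$: both $\Delta_\pm$ arise from maps $\widetilde\varphi_{(x_1,x_2,x_3)}$ with $\Det = (a,b,c)$ lying in a fixed $\SL_2(\F_p)$-orbit, and projecting away one coordinate leaves only a \emph{pair} $(x_i, x_j)$ whose $\SL_2(\F_p)$-orbit no longer remembers the sign of $abc$; equivalently, any two ordered pairs of linearly independent vectors in $\F_p^2$ are $\SL_2(\F_p)$-equivalent. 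Carrying this out rigorously requires keeping careful track of the degree of $\pi^3$ and of the finite fibers of $\widetilde\varphi$, but presents no conceptual difficulty, and the excerpt's second (independent) proof presumably bypasses the projector entirely. I expect the argument to conclude by remarking that, since $P_{\gks}(e)$ annihilates $H^4$, the identity $\Xi = P_{\gks}(e)_*(\Xi)$ forces $\cl^2_B(\Xi) = 0$, hence $\Xi \in \CH^2(X_0(p)^3_{\bar\Q})_0$.
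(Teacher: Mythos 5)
Your overall strategy matches the paper's: expand $P_{\gks}(e)_*(\Xi)$ and show each term $P_T(e)_*(\Xi)$ for $T\neq\{1,2,3\}$ vanishes because $\Delta_+$ and $\Delta_-$ become indistinguishable after projection away from at least one factor. The crux, as you correctly identify, is showing $\pr_{ij}$ does not distinguish $\Delta_+$ from $\Delta_-$, and your group-theoretic reason (a pair $(x_i,x_j)$ only remembers one of the three determinants $a,b,c$, hence not $abc$ modulo squares) is exactly the conceptual content behind the paper's Lemma \ref{lem:prphi}. Where the paper differs is in execution, and the difference matters. You flag that ``keeping careful track of the degree of $\pi^3$ and of the finite fibers of $\widetilde\varphi$'' is required, and then assert it ``presents no conceptual difficulty''; the paper sidesteps this entirely by producing, in Lemma \ref{lem:prphi}, explicit triples $(x_1,x_2,x_3)$ and $(y_1,y_2,y_3)$ with $\pr_{ij}\circ\varphi_{(x_1,x_2,x_3)}=\pr_{ij}\circ\varphi_{(y_1,y_2,y_3)}$ \emph{as morphisms} $X(p)\to X_0(p)^2$, not merely as set-theoretic images. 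Equality of morphisms immediately gives equality of pushforward cycles with all multiplicities included, so no separate degree bookkeeping is needed. The paper also handles the singleton terms differently: rather than computing $P_i(e)_*(\Delta_\pm)$ directly as a multiple of a coordinate line (which again requires a degree argument), it factors $q_i(e)\circ\pr_i=(q_{ik}(e)\circ\pr_{ik})\circ(q_{ij}(e)\circ\pr_{ij})$ and deduces $P_i(e)_*(\Xi)=0$ from the already-established vanishing of $P_{ij}(e)_*(\Xi)$. Finally, a small slip: the $T=\emptyset$ term $P_\emptyset(e)_*(\Delta_\pm)$ is not ``a multiple of the class of $(e,e,e)$''---it is simply zero, because proper pushforward of a $1$-dimensional cycle along a map whose image is $0$-dimensional vanishes by definition; your conclusion is right but the stated reason is not.
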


We will use the following lemma.

\begin{lemma}\label{lem:prphi}
Let $i<j \in \{ 1,2,3 \}$ and denote by $\pr_{ij}: X_0(p)^3 \lra X_0(p)^2$ the natural projection to the product of the $i$-th
 and $j$-th components. There exist elements $[x_1, x_2, x_3]$ and $[y_1, y_2, y_3]$ of $\widetilde{I}^\times$ satisfying
\[
\prod_{k=1}^3 \Det([x_1, x_2, x_3])_k \in (\F_p^\times)^{(2)} \qquad \text{ and } \qquad \prod_{k=1}^3 \Det([y_1, y_2, y_3])_k \not\in (\F_p^\times)^{(2)},
\]
and such that there is an equality
$\pr_{ij}\circ \varphi_{(x_1, x_2, x_3)}=\pr_{ij}\circ \varphi_{(y_1, y_2, y_3)}$
of maps $X(p)\lra X_0(p)^2$.
 \end{lemma}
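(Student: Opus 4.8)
The plan is to exploit the fact that the projection $\pr_{ij}\circ\varphi_{(x_1,x_2,x_3)}$ depends only on the two coordinates $x_i, x_j$, together with the enormous freedom we have in choosing the third coordinate $x_k$ (where $\{i,j,k\}=\{1,2,3\}$). First I would fix, say, $x_i=(1,0)$ and $x_j=(0,1)$, so that the map $\pr_{ij}\circ\varphi_{(x_1,x_2,x_3)}\colon X(p)\to X_0(p)^2$ sends $(E,(P,Q))$ to $((E,\langle P\rangle),(E,\langle Q\rangle))$, independently of the choice of $x_k$. Now for any $x_k=(a_k,b_k)$ with $x_k\notin\F_p\cdot x_i\cup\F_p\cdot x_j$ (i.e.\ $a_k\neq 0$ and $b_k\neq 0$), the triple $(x_i,x_j,x_k)$ lies in $\widetilde I^\times$: indeed $\Det$ evaluated on this triple is $(\pm b_k,\mp a_k,\pm 1)$ up to the cyclic ordering, so all three determinants are nonzero. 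Its ``orientation invariant'' $\prod_k\Det(\cdot)_k$ is then $\pm a_k b_k$ up to a fixed nonzero sign, and since $a_k, b_k$ range freely over $\F_p^\times$ as $x_k$ varies, the product $a_kb_k$ takes both quadratic-residue and non-residue values. Choosing $x_k$ once with $a_kb_k$ a square and once with $a_kb_k$ a non-square (after incorporating the fixed sign) yields the two triples $[x_1,x_2,x_3]$ and $[y_1,y_2,y_3]$ with the required membership in, respectively outside, $(\F_p^\times)^{(2)}$, while both project to the same map under $\pr_{ij}$.

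The one point requiring a little care is bookkeeping of indices: the formula for $\Det$ puts the three $2\times 2$ minors in a specific cyclic order, so which pair of slots is ``free to fill in'' and what the resulting sign of $\prod_k\Det(\cdot)_k$ is depends on whether $\{i,j\}$ is $\{1,2\}$, $\{1,3\}$, or $\{2,3\}$; but in every case two of the three coordinates are pinned down by $\pr_{ij}$ and the third is free, and in every case varying that third coordinate over vectors with both entries in $\F_p^\times$ makes $\prod_k\Det(\cdot)_k$ run over all of $\F_p^\times$, hence over both cosets modulo squares. (One should also check $p>3$ so that $\F_p^\times$ is not exhausted by the forbidden locus, which is automatic since $X_0(p)$ has positive genus by the standing assumption, forcing $p\geq 11$.)

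The main obstacle, such as it is, is purely notational rather than mathematical: one must verify that the equality $\pr_{ij}\circ\varphi_{(x_1,x_2,x_3)}=\pr_{ij}\circ\varphi_{(y_1,y_2,y_3)}$ genuinely holds on the nose as maps of $\Q$-schemes $\bar M_p\to X_0(p)^2$ — not merely on $\bar\Q$-points — which follows because $\widetilde\varphi_{(x_1,x_2,x_3)}$ is defined over $\Q$ and $\pr_{ij}\circ\pi^3\circ\widetilde\varphi_{(x_1,x_2,x_3)}$ literally forgets the $k$-th coordinate data, so it is the same morphism for any two choices of $x_k$. Once this is in hand, the lemma is immediate; in the application one restricts along $X(p)\hookrightarrow\bar M_p$ to obtain the stated equality of maps $X(p)\to X_0(p)^2$.
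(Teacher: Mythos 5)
Your proposal is correct and is essentially the paper's approach made conceptual: the paper simply writes out one explicit pair of triples for each $(i,j)$, while you explain the underlying mechanism (the $i$-th and $j$-th entries of the triple are what $\pr_{ij}\circ\varphi$ sees, so the $k$-th entry is a free parameter whose variation sweeps $\prod_k\Det(\cdot)_k$ over all of $\F_p^\times$). Both are fine; the explicit triples are just a particular instance of your argument, and your index bookkeeping and the trivial $p\geq 11$ observation are handled correctly.
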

 
 \begin{proof}
Fix some $a\not\in (\F_p^\times)^{(2)}$.\\
If $i=1$ and $j=2$, then we may take 
\[
(x_1, x_2, x_3)=((1,0), (0,1), (-1, -1)) \qquad \text{ and } \qquad (y_1, y_2, y_3)=((1,0), (0,1), (-a, -1)).
\] 
If $i=1$ and $j=3$, then we may take 
\[
(x_1, x_2, x_3)=((-1,0), (1,-1), (0, 1)) \qquad \text{ and } \qquad (y_1, y_2, y_3)=((-1,0), (a,-1), (0, 1)).
\] 
If $i=2$ and $j=3$, then we may take 
\[
(x_1, x_2, x_3)=((-1,-1), (1,0), (0, 1)) \qquad \text{ and } \qquad (y_1, y_2, y_3)=((-1,-a), (1,0), (0, 1)).
\] 
 \end{proof}
 
 \begin{remark}\label{rem:phipm}
 The maps $\varphi_{(x_1, x_2, x_3)}$ and $\varphi_{(y_1, y_2, y_3)}$ associated with the specific choices made in the above proof will be denoted $\varphi_{+}(ij)$ and $\varphi_{-}(ij)=\varphi_{-}(ij; a)$ respectively.
 \end{remark}

\begin{proof}[Proof of Theorem \ref{prop:xihom}]
Observe that 
\[
P_{\gks}(e)_*(\Xi)=\Xi - P_{12}(e)_*(\Xi) -P_{13}(e)_*(\Xi) - P_{23}(e)_*(\Xi)  +P_{1}(e)_*(\Xi)  + P_{2}(e)_*(\Xi) +P_{3}(e)_*(\Xi). 
\]

Let $i<j \in \{ 1,2,3\}$ and consider $P_{ij}(e)_*(\Xi)$. Let $k\in \{ 1,2,3\}$ be the remaining element distinct from $i$ and $j$. The correspondence $P_{ij}(e)$ is the graph of the function
$$q_{ij}(e) \circ \pr_{ij} : X_0(p)^3 \lra X_0(p)^3,$$ which replaces the $k$-th coordinate by the point $e$, 
and $P_{ij}(e)_*(\Xi)$ is the image of $\Xi$ under $q_{ij}(e) \circ \pr_{ij}$.  
Choose $[x_1, x_2, x_3]$ and $[y_1, y_2, y_3]$ of $\widetilde{I}^\times$ satisfying the properties of Lemma \ref{lem:prphi} for the fixed $i$ and $j$.
The first condition ensures that 
$$\varphi_{(x_1, x_2, x_3)}(X(p))=\Delta_+ \quad \text{ and } \quad \varphi_{(y_1, y_2, y_3)}(X(p))=\Delta_-,$$ while the second condition implies that
\[
P_{ij}(e)_*(\Delta_+)=q_{ij}(e) \circ \pr_{ij}\circ \varphi_{(x_1, x_2, x_3)}(X(p))=q_{ij}(e) \circ \pr_{ij}\circ \varphi_{(y_1, y_2, y_3)}(X(p))=P_{ij}(e)_*(\Delta_-).
\]
As a consequence, we have $P_{ij}(e)_*(\Xi)=0$.

Let $i\in \{ 1,2,3 \}$ and consider $P_{i}(e)_*(\Xi)$. Let $j, k\in \{ 1,2,3 \}$ such that $\{ i, j, k \}=\{ 1,2,3 \}$. The correspondence $P_{i}(e)$ is the graph of the map
$q_{i}(e) \circ \pr_{i} : X_0(p)^3 \lra X_0(p)^3$, which replaces the $j$-th and $k$-th coordinates by the point $e$,
and $P_{i}(e)_*(\Xi)$ is the image of $\Xi$ under $q_{i}(e) \circ \pr_{i}$. This map can be written as the composition
\[
q_{i}(e) \circ \pr_{i}=(q_{ik}(e) \circ \pr_{ik})\circ (q_{ij}(e) \circ \pr_{ij}),
\]
hence in terms of correspondences we have $P_{i}(e)=P_{ik}(e)\circ P_{ij}(e)$.
It follows from the previous paragraph that 
$P_{i}(e)_*(\Xi)=0$.

We conclude that $\Xi=P_{\gks}(e)_*(\Xi)$. 
\end{proof}

\begin{remark}
A perhaps more direct way to see that the cycle $\Xi$ is null-homologous is to consider its image under the de Rham cycle class map, namely $$\cl^2_{\dR}(\Xi)=\cl^2_{\dR}(\Delta_+)-\cl^2_{\dR}(\Delta_-)\in H^4_{\dR}(X_0(p)^3/\C),$$ where we recall that
\[
\int_{X_0(p)(\C)^3} \cl^2_{\dR}(\Delta_{\pm})\wedge \alpha = \int_{\Delta_{\pm}} \alpha, \qquad \text{ for all } \alpha \in H^2_{\dR}(X_0(p)^3/\C).
\]
By the K\"unneth decomposition for $H^2_{\dR}(X_0(p)^3/\C)$, any component of $\alpha$ can at most involve de Rham classes coming from two of the three components of $X_0(p)^3$; indeed, the components are either of the form $\pr_i^*(\beta)$ for some $\beta\in H^2_{\dR}(X_0(p)/\C)$ and $i\in \{ 1,2,3\}$, or of the form $\pr_j^*(\gamma)\wedge \pr_k^*(\delta)$ for some $\gamma, \delta\in H^1_{\dR}(X_0(p)/\C)$ and $j< k\in \{ 1,2,3\}$. Using the notations of Remark \ref{rem:phipm}, observe that 
\begin{align*}
\int_{\Delta_{\pm}} \pr_i^*(\beta) & = \int_{X(p)} (\pr_{i}\circ\varphi_{\pm}(ij))^*(\beta) \\
\int_{\Delta_{\pm}} \pr_j^*(\gamma)\wedge \pr_k^*(\delta) & =\int_{X(p)} (\pr_{jk}\circ\varphi_{\pm}(jk))^*(\gamma\wedge \delta).
\end{align*}
Since 
\begin{align*}
\pr_i \circ \varphi_+(ij) & = \pr_i\circ \varphi_-(ij): X(p)\lra X_0(p) \\
\pr_{jk} \circ \varphi_+(jk) & = \pr_{jk}\circ \varphi_-(jk) : X(p)\lra X_0(p)^2,
\end{align*}
this implies that $\cl^2_{\dR}(\Delta_+)=\cl^2_{\dR}(\Delta_-)$ in $H^4_{\dR}(X_0(p)^3/\C)$.
\end{remark}

\section{Weil--Deligne representations and local factors}\label{s:WD}

This section provides background material on Weil--Deligne representations and epsilon factors following \cite{del73, rohrlich94}.

\subsection{Weil--Deligne representations}\label{s:WDG}

Let $q$ denote a prime number. The embedding $\bar{\Q}\hookrightarrow \bar{\Q}_q$ fixed in Section \ref{not:emb} realises $\Gal(\bar{\Q}_q/\Q_q)$ as the decomposition subgroup at $q$ of $\Gal(\bar{\Q}/\Q)$. It sits in the short exact sequence
\[
1\lra I_q\lra \Gal(\bar{\Q}_q/\Q_q)\overset{r}{\lra} \Gal(\bar{\F}_q/\F_q)\lra 1
\]
where $I_q$ denotes the inertia subgroup at $q$ and $r$ denotes the natural reduction map. The group $\Gal(\bar{\F}_q/\F_q)$ is topologically generated by the Frobenius automorphism $\phi : x\mapsto x^q$ and is isomorphic to the profinite completion $\hat{\Z}$ of $\Z$. We denote by $\varphi$ the inverse of the Frobenius automorphism $\phi$. The Weil group at $q$, denoted $W(\bar{\Q}_q/\Q_q)$, is defined as the pre-image under $r$ of the infinite cyclic subgroup of $\Gal(\bar{\F}_q/\F_q)$ generated by $\phi$. We endow it with the coarsest topology for which $r : W(\bar{\Q}_q/\Q_q)\lra \langle \phi\rangle$ and $I_q \hookrightarrow W(\bar{\Q}_q/\Q_q)$ are both continuous and for which $W(\bar{\Q}_q/\Q_q)$ is a topological group.
A representation of the Weil group is a continuous homomorphism of groups 
\[ \sigma_q : W(\bar{\Q}_q/\Q_q)\lra \GL(V) \] 
where $V$ is a finite-dimensional complex vector space.

\begin{example}
Examples of Weil representations include all finite-dimensional complex representations of Galois groups of finite extensions of $\Q$. Also, we identify all characters of $\Q_q^\times$ with characters of $W(\bar{\Q}_q/\Q_q)$ via the Artin isomorphism 
\begin{equation}\label{eq:artin}
\Q_q^\times \simeq W(\bar{\Q}_q/\Q_q)^{\ab}
\end{equation}
normalised so that it maps $q$ to the image in $W(\bar{\Q}_q/\Q_q)^{\ab}$ of an inverse Frobenius element of $W(\bar{\Q}_q/\Q_q)$. Another example of a Weil representation is given by the character 
\begin{equation}\label{def:omega}
    \omega_q : W(\bar{\Q}_q/\Q_q)\lra \C^\times
\end{equation} 
defined by $\omega_q(I_q)=1$ (i.e., it is unramified) and $\omega_q(\Phi)=q^{-1}$ where $\Phi$ is an inverse Frobenius element of $\Gal(\bar{\Q}_q/\Q_q)$ (i.e., an element satisfying $r(\Phi)=\varphi$). Under the isomorphism (\ref{eq:artin}) the character $\omega_q$ corresponds to the $q$-adic norm character $\Vert \cdot \Vert_q : \Q_q^\times \lra \C^\times$ normalised such that $\Vert q\Vert_q=q^{-1}$.
\end{example}

\begin{definition}\label{def:WDrep}
A Weil--Deligne representation is a pair $\sigma'_q=(\sigma_q, N_q)$ where $\sigma_q$ is a Weil representation on a finite-dimensional complex vector space $V$ and $N_q$ is a nilpotent endomorphism of $V$ satisfying 
\begin{equation}
\sigma_q(g)\circ N_q\circ \sigma_q(g)^{-1}=\omega_q(g)N_q, \qquad \text{ for all } g\in W(\bar{\Q}_q/\Q_q).
\end{equation}
\end{definition}

There is also a theory of Weil--Deligne representations at archimedean places, but we will not need it in our calculations. 

\begin{example}\label{ex:cyc}
Fix an embedding $\iota: \Q_\ell\hookrightarrow \C$.
Consider the $\ell$-adic cyclotomic character 
\[
\omega_{\cyc, \ell} : \Gal(\bar{\Q}/\Q)\twoheadrightarrow \Gal(\Q(\zeta_{\ell^{\infty}})/\Q)\lra \Z_\ell^\times
\]
where $\zeta_{\ell^{\infty}}$ denotes a compatible system $(\zeta_{\ell^n})_n$ of primitive $\ell^n$-th roots of unity. If $\sigma$ is an element in $\Gal(\bar{\Q}/\Q)$, then $\sigma(\zeta_{\ell^n})=\zeta_{\ell^n}^{m_n}$ for some compatible $m_n\in (\Z/\ell^n\Z)^\times$ and $\omega_{\cyc, \ell}(\sigma)=(m_n)_n\in \Z_\ell^\times$. This character is unramified at $q$ since the extension $\Q(\zeta_{\ell^{\infty}})$ of $\Q$ is ramified only at $\ell$. The Weil--Deligne representation at $q$ of $\omega_{\cyc, \ell}$ is then the Weil representation $\iota\circ\omega_{\cyc, \ell}\vert_{W(\bar{\Q}_q/\Q_q)}$. If $\Phi$ is a geometric Frobenius element of $W(\bar{\Q}_q/\Q_q)$, then $\omega_{\cyc, \ell}(\Phi)=q
^{-1}\in \Z_\ell^{\times}$ and thus $$\iota\circ\omega_{\cyc, \ell}\vert_{W(\bar{\Q}_q/\Q_q)}=\omega_q,$$ where $\omega_q$ is defined by \eqref{def:omega}. In particular, the Weil--Deligne representation of $\omega_{\cyc, \ell}$ at $q$ is independent of $\iota$ and $\ell$.
\end{example}

\begin{example}\label{def:sp2}
Let $( e_0, e_1 )$ denote the standard basis of $\C^2$. The special representation of the Weil--Deligne group at $q$ of dimension $2$, denoted $\spe(2)$, is the representation $(\sigma_q, N)$ defined by the matrices 
\[
    \sigma_q:=\left(\begin{matrix} 1 & 0 \\ 0 & \omega_q \end{matrix}\right) \qquad \text{ and } \qquad N:=\left(\begin{matrix} 0 & 0 \\ 1 & 0 \end{matrix}\right).
\]
\end{example}

\begin{example}\label{ex:3}
Let $f = \sum_{n\geq 1} a_n(f)q^n\in S_2(\Gamma_0(p))$ be a normalised cuspidal eigenform.
Let $\mathfrak{l}$ denote the prime of $K_{f}$ above $\ell$ determined by the field embeddings fixed in Section \ref{not:emb}. Attached to this data is a $2$-dimensional $\mathfrak{l}$-adic Galois representation 
\begin{equation}\label{rep:f}
V_{\ell}(f) : \Gal(\bar{\Q}/\Q) \lra \GL_2(K_{f, \mathfrak{l}}).
\end{equation}
See \cite[Theorem 3.1]{ddt} for the precise definition and properties. Note that we suppressed the dependencies of the various embeddings from the notation, as these have been fixed from the beginning. Let $q$ be a prime different from $\ell$ and fix an embedding $\iota_\ell : K_{f, \mathfrak{l}} \hookrightarrow \C$. Following \cite[\S 4]{rohrlich94}, one can associate to $V_\ell(f)$ a $2$-dimensional Weil--Deligne representation $\sigma'_{f, \ell, \iota_\ell, q}=(\sigma_{f, \ell, \iota_\ell, q}, N_{f, \ell, \iota_\ell, q})$. It turns out that the isomorphism class of the representation $\sigma'_{f, \ell, \iota_\ell, q}$ is independent of $\ell$ and $\iota_\ell$ and we shall simply write $\sigma'_{f, q}=(\sigma_{f, q}, N_{f, q})$. This is the Weil--Deligne representation of $f$ at $q$. 
\begin{proposition}\label{WDf}
The Weil--Deligne representations of $f$ satisfy the following: 
\begin{itemize}
\item If $q\neq p$, then $N_{f, q}=0$ and 
$
\sigma_{f, q}\simeq\xi_q\oplus \xi_q^{-1}\omega_q^{-1}
$
for some unramified character $\xi_q$. Here $\omega_q$ is the Weil--Deligne representation of the $\ell$-adic cyclotomic character defined by \eqref{def:omega} and Example \ref{ex:cyc}.
\item Let $\lambda$ be the unramified quadratic character of $W(\bar{\Q}_p/\Q_p)$ defined by $\lambda(\Phi)=a_p(f)$, where $\Phi$ denotes an inverse Frobenius element. Then 
$
\sigma_{f, p}' \simeq \lambda \omega_q^{-1}\otimes \spe(2),
$
so that, in particular, $N_{f, q}\neq 0$ and $\sigma_{f, q}'$ is ramified. Here $\spe(2)$ is the special representation in Example \ref{def:sp2}.
\end{itemize}
\end{proposition}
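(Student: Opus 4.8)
The plan is to prove Proposition~\ref{WDf} by invoking the known local-global compatibility at $q$ for the Galois representations $V_\ell(f)$ attached to weight-$2$ newforms of level $\Gamma_0(p)$, and then translating this into the language of Weil--Deligne representations. The independence of the isomorphism class of $\sigma'_{f,\ell,\iota_\ell,q}$ from $\ell$ and $\iota_\ell$ is a formal consequence of the Weil-conjectures-type integrality of the characteristic polynomial of Frobenius together with the purity and monodromy weight considerations, so I would relegate that remark to a citation (e.g.\ to the discussion in \cite{rohrlich94, ddt}) and focus on the structural statements.

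First, for $q \neq p$: the newform $f$ has conductor $p$, so $V_\ell(f)$ is unramified at $q$. Hence the associated Weil--Deligne representation has $N_{f,q} = 0$ and $\sigma_{f,q}$ is an unramified two-dimensional representation. By the Eichler--Shimura relation (equivalently, by the shape of the characteristic polynomial of geometric Frobenius on $V_\ell(f)$), the eigenvalues $\alpha_q, \beta_q$ of a geometric Frobenius element $\Phi$ satisfy $\alpha_q \beta_q = q$ (this is the determinant, which is $\omega_{\cyc,\ell}^{-1}$, i.e.\ $\omega_q$ after passing to the Weil--Deligne representation) and $\alpha_q + \beta_q = a_q(f)$. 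Setting $\xi_q$ to be the unramified character with $\xi_q(\Phi) = \alpha_q$, we get $\sigma_{f,q} \simeq \xi_q \oplus \xi_q^{-1}\omega_q^{-1}$, since the second eigenvalue is $\beta_q = q^{-1}\cdot q \cdot \beta_q / 1 = \alpha_q^{-1} q = \alpha_q^{-1}\cdot\omega_q(\Phi)^{-1}$. (One must double-check the normalization of $\omega_q$ from \eqref{def:omega}: $\omega_q(\Phi) = q^{-1}$, so $\xi_q^{-1}\omega_q^{-1}$ evaluated at $\Phi$ is $\alpha_q^{-1}q = \beta_q$, as required. This is the only place where a sign/normalization slip could occur.)

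Next, for $q = p$: since $f$ is a newform of level exactly $\Gamma_0(p)$ with trivial nebentypus, $\pi_p$ (the local automorphic representation at $p$ of the associated automorphic representation) is the Steinberg representation twisted by an unramified quadratic character; concretely $a_p(f) = \pm p^{1/2 - 1} \cdot(\text{unit})$, and in the weight-$2$ normalization $a_p(f) = \pm 1$. By local-global compatibility (Carayol, or for the semistable case the direct computation via the Tate curve / the $p$-adic étale cohomology of $X_0(p)$), the Weil--Deligne representation $\sigma'_{f,p}$ is the twist of $\spe(2)$ by an unramified character. To pin down the twist, I would compare determinants and the action of $\Phi$: $\det\sigma'_{f,p} = \omega_p^{-1}$ (the cyclotomic determinant), while $\det(\lambda\omega_p^{-1}\otimes\spe(2)) = \lambda^2 \omega_p^{-2}\cdot\det\spe(2) = \lambda^2\omega_p^{-2}\cdot\omega_p = \omega_p^{-1}$ using $\lambda^2 = 1$ and $\det\spe(2) = \omega_p$ (from the matrix form in Example~\ref{def:sp2}, the diagonal entries being $1$ and $\omega_p$). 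This forces the unramified twist to be quadratic, and evaluating the trace of $\Phi$ (which on $\sigma'_{f,p}$ is $a_p(f)$ by Eichler--Shimura, as the monodromy-invariants is one-dimensional with Frobenius eigenvalue $a_p(f)$) identifies $\lambda$ as the unramified character with $\lambda(\Phi) = a_p(f)$. Then automatically $N_{f,p} \neq 0$ and $\sigma'_{f,p}$ is ramified.

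The main obstacle is not conceptual but bookkeeping: getting every normalization consistent --- the geometric-versus-arithmetic Frobenius convention, the normalization of the Artin reciprocity map (the excerpt fixes it to send $q$ to inverse Frobenius), the normalization $\Vert q\Vert_q = q^{-1}$ of $\omega_q$, and the weight-$2$ (motivic) normalization of $V_\ell(f)$ so that $\det V_\ell(f) = \omega_{\cyc,\ell}^{-1}$ and $|a_p(f)| = 1$. I would state these conventions explicitly up front (pointing to \cite[\S4]{rohrlich94} and \cite[Theorem~3.1]{ddt}, which the proposition already references) and then the verification is the short determinant-and-trace comparison sketched above. Strictly speaking, once one cites local-global compatibility at $p$ in the semistable case --- which for $X_0(p)$ can be seen very concretely from the special fibre being a union of two copies of the modular curve $X_0(1)$ in characteristic $p$ meeting at supersingular points, i.e.\ from the theory of the Tate parametrization --- there is nothing left to do but match parameters.
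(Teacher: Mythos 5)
Your proof is correct and takes the same route as the paper, which simply cites \cite[Theorem 3.1]{ddt} together with \cite[\S 14, \S 15]{rohrlich94} and asserts that those proofs adapt. You have essentially unpacked what that citation entails: unramifiedness at $q\neq p$ plus the characteristic polynomial $X^2 - a_q X + q$ of geometric Frobenius give the split into $\xi_q\oplus\xi_q^{-1}\omega_q^{-1}$, while at $p$ the semistable/Steinberg structure (local--global compatibility, or concretely the Tate uniformisation of the semistable reduction of $J_0(p)$) identifies $\sigma'_{f,p}$ as an unramified twist of $\spe(2)$, with the determinant constraint forcing the twist to be $\lambda\omega_p^{-1}$ for $\lambda$ unramified quadratic and the Euler factor $(1-a_p(f)p^{-s})^{-1}$ pinning down $\lambda(\Phi)=a_p(f)$. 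Your normalization bookkeeping ($\omega_q(\Phi)=q^{-1}$, Artin map sending $q$ to inverse Frobenius, $\det V_\ell(f)=\omega_{\cyc,\ell}^{-1}$) is consistent with the paper's conventions, so the computation closes cleanly.
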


\begin{proof}
Using \cite[Theorem 3.1]{ddt}, the proofs in \cite[\S 14, \S15]{rohrlich94} adapt to this setting and give the above descriptions of the Weil--Deligne representations of $f$. In particular, these are independent of the choices of a prime $\ell$ and an embedding $\iota_\ell : K_{f, \mathfrak{l}} \hookrightarrow \C$.
\end{proof}
\end{example}

\subsection{Local factors}\label{s:epsilon}

Epsilon factors were first introduced by Deligne and their properties are summarised in \cite[\S 5]{del73}. We will follow the exposition in \cite{rohrlich94} to collect the essential properties needed for the purposes of this paper. 

If $q$ is a finite place, let $\sigma_q'=(\sigma_q, N_q)$ be a Weil--Deligne representation with associated finite-dimensional complex vector space $V$. Let $\psi_q : \Q_q\lra \C^\times$ denote an additive character and let $\d x_q$ denote the choice of a Haar measure on $\Q_p$. The epsilon factor associated to $\sigma_q'$ depends on $\psi_q$ and $\d x_q$ and is given by 
\begin{equation}\label{eqprime}
\epsilon'(\sigma_q', \psi_q, \d x_q):=\epsilon(\sigma_q, \psi_q, \d x_q)\delta(\sigma_q')\in \C^\times,
\end{equation}
where
\begin{equation}\label{def:deltawd} 
\delta(\sigma_q'):=\det(-\Phi \: \vert \: V^{I_q}/(V^{I_q}\cap \ker N_q)),
\end{equation}
and $\epsilon(\sigma_q, \psi_q, \d x_q)$ is the epsilon factor of the Weil representation $\sigma_q$, which we will now describe.

Explicit formulas for the epsilon factor of a character at a finite place are given as follows. Let $\mu$ be a character of $\Q_q^\times$ identified with a $1$-dimensional representation of the Weil group via \eqref{eq:artin}. Let $n(\psi_q)$ denote the largest integer $n$ such that $\psi_q$ is trivial on $q^{-n}\Z_q$. Let $a(\mu)$ denote the conductor of $\mu$, i.e., $a(\mu)=0$ if $\mu$ is unramified and otherwise $a(\mu)$ is the smallest positive integer $m$ such that $\mu$ is trivial on $1+q^m\Z_q$. Then 
    \begin{equation}\label{epsQq}
    \epsilon(\mu, \psi_q, \d x_q)=
    \begin{cases}
    \int_{q^{-(n(\psi_q)+a(\mu))}\Z_q^\times} \mu^{-1}(x) \psi_q(x) \d x_q & \text{ if $\mu$ is ramified} \\
    \mu \omega_q^{-1}(q^{n(\psi_q)}) \int_{\Z_q} \d x_q & \text{ if $\mu$ is unramified}.
\end{cases}
\end{equation} 

The epsilon factor of a Weil representation is completely determined by the following result. 
\begin{theorem}\label{prop:eps}
Let $k$ be either $\R, \C$ or $\Q_q$ for some finite place $q$. There is a unique function $\epsilon$, which to any Weil representation $\sigma$, any non-trivial additive character $\psi : k\lra \C^\times$ and any choice of a Haar measure $\d x$ on $k$, associates a complex number $\epsilon(\sigma, \psi, \d x)\in \C^\times$ satisfying:
\begin{itemize}
\item[$i)$] $\epsilon(*, \psi, \d x)$ is multiplicative in short exact sequences.
\item[$ii)$] If $L/k$ is any finite extension of $k$ in $\bar{k}$ and $\sigma_L$ is a Weil representation of $L$, then for any choice of Haar measure $\d x_L$ on $L$, we have 
\[
\epsilon\bigg(\ind_{W(\bar{k}/L)}^{W(\bar{k}/k)} \sigma_L, \psi, \d x\bigg)=\epsilon\bigg(\sigma_L, \psi\circ\tr_{L/k}, \d x_L\bigg)\left(\frac{\epsilon(\ind_{W(\bar{k}/L)}^{W(\bar{k}/k)} 1_L, \psi, \d x)}{\epsilon(1_L, \psi\circ\tr_{L/k}, \d x_L)}\right)^{\dim \sigma_L}.
\]
\item[$iii)$] If $\dim \sigma=1$, then $\epsilon(\sigma, \psi, \d x)$ is given by explicit formulas (formula (\ref{epsQq}) in the case $k=\Q_q$, and see \cite{del73} for the formulas in the archimedean case, which we will not need).
\end{itemize}
\end{theorem}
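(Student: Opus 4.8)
The plan is to follow Deligne's construction of local constants \cite[\S 4--5]{del73}, of which this is precisely the existence-and-uniqueness statement; for the applications below one could equally well take it as a black box. Throughout, write $W_E:=W(\bar k/E)$ for $E/k$ finite inside $\bar k$, let $R(W_E)$ be the Grothendieck group of virtual Weil representations of $E$, and let $R^0(W_E)\subset R(W_E)$ be the subgroup of elements of virtual dimension $0$.

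\emph{Uniqueness.} First I would use $i)$ to extend $\epsilon(\cdot,\psi,\d x)$ to a homomorphism $R(W_k)\to\C^\times$, and note that $iii)$ already fixes $\epsilon(1_k,\psi,\d x)$ because $1_k$ is one-dimensional. Since every Weil representation $\sigma$ of dimension $n$ satisfies $\sigma=(\sigma-n\cdot 1_k)+n\cdot 1_k$ in $R(W_k)$, it is enough to determine $\epsilon$ on $R^0(W_k)$. On $R^0$ the correction factor in $ii)$ is trivial, so $ii)$ reduces to the clean identity $\epsilon(\ind_{W_L}^{W_k}\rho,\psi,\d x)=\epsilon(\rho,\psi\circ\tr_{L/k},\d x_L)$ for $\rho\in R^0(W_L)$. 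By Brauer induction in the form valid for Weil groups --- every representation is an unramified twist of one with finite image, induction is compatible with unramified twists, and classical Brauer handles the finite-image part --- the group $R^0(W_k)$ is generated by the virtual representations $\ind_{W_L}^{W_k}(\mu-1_L)$, for $\mu$ a character of $W_L^{\ab}$. On such generators the displayed identity together with the explicit one-dimensional formulas of $iii)$ (formula \eqref{epsQq} at finite places, and the $\Gamma$-factor recipes at archimedean places) determines $\epsilon$ uniquely.

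\emph{Existence.} The content of the theorem is that the recipe forced by the uniqueness argument is well defined, i.e. that defining $\epsilon$ on characters by $iii)$ and extending to monomial virtual representations by $ii)$ respects all $\Z$-linear relations among induced characters, hence descends to $R(W_k)$. Following Deligne, I would reduce this coherence problem to a single assertion, \emph{inductivity in degree zero}: for every finite $L/k$ and every $\rho\in R^0(W_L)$ one has $\epsilon(\ind_{W_L}^{W_k}\rho,\psi,\d x)=\epsilon(\rho,\psi\circ\tr_{L/k},\d x_L)$. Given this, transitivity of induction together with the passage to virtual dimension $0$ (which kills the $\lambda$-type correction factors) forces compatibility with every relation among monomials, since such relations come, via Mackey's formula, from transitivity; properties $i)$, $ii)$, $iii)$ then hold by construction.

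\emph{The main obstacle.} The hard step will be proving inductivity in degree zero for the explicitly-defined constants, and I would establish it by Deligne's global argument. Given $L/k$ and $\rho\in R^0(W_L)$, realise $k$ as the completion $F_{v_0}$ of a number field $F$ and $L$ as the completion $F'_{w_0}$ of a finite extension $F'/F$ at a place $w_0\mid v_0$; lift $\rho$ to a virtual degree-zero global representation of $\Gal(\bar F/F')$ with component $\rho$ at $w_0$ and under control elsewhere; and fix a non-trivial (self-dual) additive character of $\A_F/F$ together with compatible local measures. Then compare two global functional equations: on one hand, the functional equation of the $L$-function of the idelic object attached to the global representation (Tate's thesis, extended additively to virtual degree-zero representations) has global root number $\prod_{w}\epsilon_{w}$, a product of \emph{local} constants over the places of $F'$; on the other hand, the Artin identity $L(\ind_{F'}^F(-),s)=L(-,s)$ shows this equals the analogous product over the places of $F$ built from the induced representation. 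After decomposing the induced representation place by place (Mackey again) and reconciling the local additive characters via $\psi_{F',w}=\psi_{F,v}\circ\tr_{F'_w/F_v}$ and the local measures via the product formula, one obtains that the product over all places of the local discrepancies $\epsilon_v(\ind_{F'_w/F_v}\rho_w,\psi_{F,v},\d x_{F,v})/\epsilon_w(\rho_w,\psi_{F',w},\d x_{F',w})$ equals $1$. Since this discrepancy is $1$ at all but finitely many places (the unramified situation) and the remaining places can be arranged and cleared one at a time, one isolates the identity at $v_0$. The points needing genuine care are: matching the normalisations of additive characters and Haar measures under the various traces and with the global choices; the archimedean places, where one invokes the explicit computations of \cite{del73}; and the combinatorial step turning ``the product over all places is $1$'' into a single local equality. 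Once inductivity in degree zero is in hand, $\epsilon$ exists, and together with the uniqueness argument the theorem follows. For the purposes of this paper only the characterisation $i)$--$iii)$ is used; see \cite{del73} and \cite{rohrlich94}.
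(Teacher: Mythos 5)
The paper's own proof of this theorem is simply the citation to Deligne's Th\'eor\`eme 4.1 in \cite{del73}; your sketch is a faithful outline of exactly that argument (uniqueness by passing to the Grothendieck group, reducing to virtual dimension zero where the $\lambda$-correction disappears, and invoking Brauer induction for Weil groups; existence by reducing the coherence problem to inductivity in degree zero and proving that by the global functional-equation argument via Tate's thesis, Artin's induction identity, and the product formula). So you take essentially the same approach, just unpacked rather than cited, and I see no gaps in the outline.
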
 

\begin{proof}
This is \cite[Theorem 4.1]{del73}.
\end{proof}

\begin{definition}
Let $k$ be either $\R, \C$ or $\Q_q$ for some finite place $q$. Given a Weil--Deligne representation $\sigma'=(\sigma, N)$ of $k$, the choice of an additive character $\psi : k\lra \C^\times$ and a Haar measure $\d x$ on $k$, we define the root number 
\[
W(\sigma', \psi):=\frac{\epsilon'(\sigma', \psi, \d x)}{\vert \epsilon'(\sigma', \psi, \d x)\vert}.
\]
\end{definition}

As the notation suggests, the root number is independent of the choice of a Haar measure $\d x$, as can be seen from \cite[\S 11 Proposition (ii)]{rohrlich94}. Moreover, if the Weil--Deligne representation $\sigma'_q$ at a finite prime $q$ is essentially symplectic, then the local root number at $q$ is independent of the additive character $\psi$ and belongs to $\{ \pm 1 \}$ by \cite[\S 12]{rohrlich94}. We shall simply write $W(\sigma'_q)$ in this case.

We end this subsection by collecting a few results concerning epsilon factors of Weil representations at finite places.

\begin{proposition}\label{propeps:cyctwist}
Let $\sigma_q$ be a Weil representation at a prime $q$.
If $\mu$ is an unramified character of $\Q_q^\times$, $\psi : \Q_q\lra \C^\times$ is a non-trivial additive character, and $\d x$ is Haar measure on $\Q_q$, then 
\[ 
\epsilon(\sigma_q\otimes \mu, \psi, \d x)=\mu(q^{n(\psi)\dim (\sigma_q)+a(\sigma_q)})\epsilon(\sigma_q, \psi, \d x).
\]
Here $a(\sigma_q)$ is the conductor of $\sigma_q$ defined in \cite[\S 10]{rohrlich94}.
\end{proposition}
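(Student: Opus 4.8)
The plan is to reduce the identity, via the formalism of Theorem \ref{prop:eps}, to the case $\dim\sigma_q=1$, and to settle that case by direct inspection of \eqref{epsQq}. So first I would take $\sigma_q=\nu$ a character. Twisting by the unramified character $\mu$ does not change the conductor, so $a(\nu\mu)=a(\nu)=a(\sigma_q)$, and \eqref{epsQq} computes $\epsilon(\nu\mu,\psi,\d x)$ and $\epsilon(\nu,\psi,\d x)$ by the same recipe. If $\nu$ is ramified, the integral runs over the single shell $q^{-(n(\psi)+a(\nu))}\Z_q^\times$, on which $\mu^{-1}$ is the constant $\mu(q)^{n(\psi)+a(\nu)}$; pulling it out of the integral yields the factor $\mu(q^{n(\psi)+a(\nu)})=\mu(q^{n(\psi)\dim\sigma_q+a(\sigma_q)})$. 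If $\nu$ is unramified, the second line of \eqref{epsQq} together with $(\nu\mu)\omega_q^{-1}=(\nu\omega_q^{-1})\mu$ yields the factor $\mu(q^{n(\psi)})$, again of the required shape since here $a(\sigma_q)=0$. This argument works verbatim over any $p$-adic local field, a point I will use below.

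For general $\sigma_q$ I would observe that both sides of the claimed equality are homomorphisms from the Grothendieck group of Weil representations of $W(\bar{\Q}_q/\Q_q)$ to $\C^\times$: the left-hand ratio $\sigma_q\mapsto \epsilon(\sigma_q\otimes\mu,\psi,\d x)/\epsilon(\sigma_q,\psi,\d x)$ is multiplicative by Theorem \ref{prop:eps}(i) and exactness of the functor $-\otimes\mu$, while the right-hand side $\sigma_q\mapsto\mu(q^{n(\psi)\dim\sigma_q+a(\sigma_q)})$ is multiplicative because $\dim$ and the Artin conductor exponent are additive in short exact sequences. By Brauer's induction theorem (the same input that gives uniqueness in Theorem \ref{prop:eps}) it then suffices to check the equality when $\sigma_q=\ind_{W(\bar{\Q}_q/L)}^{W(\bar{\Q}_q/\Q_q)}\chi$ for a finite extension $L/\Q_q$ and a character $\chi$ of $W(\bar{\Q}_q/L)$. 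For such $\sigma_q$, the restriction of $\mu$ to $W(\bar{\Q}_q/L)$ is the unramified character $\mu\circ\Norm_{L/\Q_q}$, so by the projection formula $\ind(\chi\otimes(\mu\circ\Norm_{L/\Q_q}))\cong(\ind\chi)\otimes\mu$ and the inductivity property (Theorem \ref{prop:eps}(ii)), applied to $\chi$ and to $\chi\otimes(\mu\circ\Norm_{L/\Q_q})$, the left-hand ratio collapses to the corresponding ratio of epsilon factors of one-dimensional representations over $L$ (the $\lambda$-factors cancel, both representations having dimension $1$). The first paragraph, applied over $L$, evaluates that ratio, and it then remains only to match exponents, using the standard formula for $n(\psi\circ\tr_{L/\Q_q})$ in terms of $n(\psi)$ and the different of $L/\Q_q$, and the conductor--discriminant formula computing $\dim$ and $a$ of an induced representation.

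The only part that genuinely requires care is this last matching of ramification invariants --- keeping straight the ramification index, the residue degree, the exponent of the different (an ideal of $\mathcal{O}_L$) versus that of the discriminant (an ideal of $\Z_q$), the conductor of $\chi$, and the normalisation of $n(\psi\circ\tr_{L/\Q_q})$; everything else is formal. In fact the cleanest option may be to spell out the $\dim\sigma_q=1$ computation (and perhaps the Grothendieck-group reduction) in the text and to cite \cite{del73,rohrlich94} for the ``twist by an unramified character'' identity, where it appears in essentially this form.
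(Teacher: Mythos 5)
Your argument is correct, but it is considerably more work than what the paper actually does: the paper's entire proof is the single line ``This is \cite[\S 11 Proposition (iii)]{rohrlich94}.'' Rohrlich proves the statement by exactly the strategy you outline --- reduce to dimension one via additivity in short exact sequences, Brauer induction, and inductivity in degree zero (your ``$\lambda$-factors cancel'' step), with the conductor--discriminant formula and $n(\psi\circ\tr_{L/\Q_q})=e\,n(\psi)+d_{L/\Q_q}$ matching the exponents. So your proof is a valid and essentially self-contained reconstruction of the cited result, whereas the paper simply appeals to it. The trade-off is clear: your version makes the dependence on \eqref{epsQq} and Theorem~\ref{prop:eps} explicit, which is pedagogically useful, but the bookkeeping in the final exponent-matching step (different versus discriminant, ramification index versus residue degree, $f\cdot d_{L/\Q_q}=v_q(\mathrm{Disc}(L/\Q_q))$) is precisely the kind of routine but error-prone computation the paper is content to delegate to the reference; your closing remark that ``the cleanest option may be to spell out the $\dim\sigma_q=1$ computation \ldots\ and to cite'' is in fact what the paper chose.

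One small point worth making explicit if you keep the full argument: the reduction to the Grothendieck group requires not merely that the ratio $\sigma_q\mapsto\epsilon(\sigma_q\otimes\mu)/\epsilon(\sigma_q)$ is additive (which follows from Theorem~\ref{prop:eps}(i) and exactness of $-\otimes\mu$, as you say), but also that it is well-defined on \emph{virtual} representations, i.e.\ that the target $\C^\times$ has no torsion issues to worry about for the signed combinations Brauer produces; this is automatic since $\C^\times$ is divisible, but a reader would want to see the word ``virtual'' or ``Grothendieck group'' carried through. Everything else checks out.
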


\begin{proof}
This is \cite[\S 11 Proposition (iii)]{rohrlich94}.
\end{proof}

The following proposition gives an explicit formula for the epsilon factor of a ramified character of conductor $1$. Note that if $\psi : \Q_q\lra \C^\times$ is an additive character with $n(\psi)=0$, then $\psi\vert_{\Z_q^\times}=1$ but $\psi\vert_{q^{-1}\Z_q^\times}\neq 1$. Thus there exists $c\in \F_q^\times$ such that $\psi(1/q)=\exp((2\pi i c)/q)$. In this case, we write $\psi_c$ for $\psi$. 

\begin{proposition}\label{form:eps}
Let $\mu$ be a ramified character of $\Q_q^\times$ identified with a $1$-dimensional representation of the Weil group via \eqref{eq:artin}. Let $\psi : \Q_q\lra \C^\times$ denote an unramified additive character, i.e., $n(\psi)=0$, and $\d x$ denote the Haar measure on $\Q_p$ such that $\int_{\Z_p} \d x=1$. Suppose that $a(\mu)=1$. Let $c\in \F_q^\times$ such that $\psi=\psi_c$. Then the following formula holds: 
\[
\epsilon(\mu, \psi, \d x)=\mu(c)\mu(q) G(\mu^{-1})
\]
where $G(\mu^{-1})=\sum_{b\in \F_q^\times} \mu^{-1}(b)e^{\frac{2\pi i b}{q}}$ is the Gauss sum of the character $\mu^{-1}$.
\end{proposition}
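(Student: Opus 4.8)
The plan is to evaluate the defining integral directly. Since $\mu$ is ramified, the relevant case of \eqref{epsQq} gives
\[
\epsilon(\mu,\psi,\d x)=\int_{q^{-(n(\psi)+a(\mu))}\Z_q^\times}\mu^{-1}(x)\psi(x)\,\d x ,
\]
and with $n(\psi)=0$ and $a(\mu)=1$ the domain of integration is simply $q^{-1}\Z_q^\times$. The integrand is locally constant, because $\mu$ has conductor $1$ and $\psi$ is unramified, so the integral is really a finite sum; the whole computation amounts to identifying that sum with a Gauss sum.

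First I would partition the domain according to the leading $q$-adic digit: $q^{-1}\Z_q^\times=\bigsqcup_{b\in\F_q^\times}\bigl(q^{-1}\tilde b+\Z_q\bigr)$, where $\tilde b\in\{1,\dots,q-1\}$ is the integer representative of $b$, each piece being a translate of $\Z_q$, hence of volume $1$ by the normalisation $\int_{\Z_q}\d x=1$. Next I would evaluate the integrand on the coset $q^{-1}\tilde b+\Z_q$, writing $x=q^{-1}\tilde b+t$ with $t\in\Z_q$. Since $n(\psi)=0$, the character $\psi$ is trivial on $\Z_q$, so $\psi(x)=\psi(q^{-1}\tilde b)=\psi(1/q)^{\tilde b}=e^{2\pi i c b/q}$, using additivity of $\psi$ together with $\psi=\psi_c$. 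Since $a(\mu)=1$, the character $\mu$ is trivial on $1+q\Z_q$; writing $x=q^{-1}(\tilde b+qt)$ with $\tilde b+qt\in\Z_q^\times$ congruent to $b$ modulo $q$, one gets $\mu^{-1}(x)=\mu^{-1}(q^{-1})\,\mu^{-1}(\tilde b+qt)=\mu(q)\mu^{-1}(b)$, where $\mu^{-1}(q^{-1})=\mu(q)$ and $\mu^{-1}(b)$ denotes the (well-defined) value of $\mu^{-1}$ on a lift of $b$. Thus the coset indexed by $b$ contributes $\mu(q)\mu^{-1}(b)e^{2\pi i cb/q}\cdot\vol(\Z_q)$, and summing over $b$,
\[
\epsilon(\mu,\psi,\d x)=\mu(q)\sum_{b\in\F_q^\times}\mu^{-1}(b)\,e^{2\pi i cb/q}.
\]

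Finally I would perform the substitution $b\mapsto c^{-1}b$ in this sum: as $b$ runs over $\F_q^\times$ so does $c^{-1}b$, and $\mu^{-1}(c^{-1}b)=\mu(c)\mu^{-1}(b)$ while $e^{2\pi i c(c^{-1}b)/q}=e^{2\pi i b/q}$, so the sum becomes $\mu(c)\sum_{b\in\F_q^\times}\mu^{-1}(b)e^{2\pi i b/q}=\mu(c)G(\mu^{-1})$, yielding $\epsilon(\mu,\psi,\d x)=\mu(c)\mu(q)G(\mu^{-1})$ as claimed. I do not expect a genuine obstacle here: the argument is a short unwinding of \eqref{epsQq}, and the only place demanding care is the bookkeeping of normalisations and inverses — that each digit-coset has volume exactly $1$, that $\psi(q^{-1}\tilde b)=e^{2\pi i cb/q}$ rather than some variant, and that $\mu^{-1}(q^{-1})=\mu(q)$ and $\mu^{-1}(c^{-1})=\mu(c)$ with the exponents landing on the correct side.
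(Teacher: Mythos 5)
Your computation is correct. The paper's own ``proof'' is simply a citation to Pacetti's Theorem~3.2\,(2), whereas you spell out the underlying calculation: unwinding the ramified case of \eqref{epsQq} with $n(\psi)=0$, $a(\mu)=1$, decomposing $q^{-1}\Z_q^\times$ into the $q-1$ cosets $q^{-1}\tilde b+\Z_q$ of volume $1$, evaluating the locally constant integrand on each coset (using triviality of $\psi$ on $\Z_q$ and of $\mu$ on $1+q\Z_q$), and then shifting $b\mapsto c^{-1}b$ to normalise the Gauss sum. This is exactly the computation that the cited reference performs, so the approach is the same in substance; you have merely filled in what the paper delegates to a citation, and all the bookkeeping (coset volumes, $\psi(q^{-1}\tilde b)=e^{2\pi i cb/q}$, $\mu^{-1}(q^{-1})=\mu(q)$, $\mu^{-1}(c^{-1})=\mu(c)$) is handled correctly.
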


\begin{proof}
This can be extracted from the proof of \cite[Theorem 3.2 (2)]{pacetti}.
\end{proof}

\begin{corollary}\label{coro:minusone}
With the same notations and assumptions as in Proposition \ref{form:eps}, we have the formula 
\[
\epsilon(\mu, \psi, \d x)\epsilon(\mu^{-1}, \psi, \d x)=q\mu(-1).
\]
\end{corollary}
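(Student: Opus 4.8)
The plan is to combine the explicit formula of Proposition \ref{form:eps} with the classical evaluation of a Gauss sum times its conjugate. First I would apply Proposition \ref{form:eps} to both $\mu$ and $\mu^{-1}$, noting that $a(\mu^{-1})=a(\mu)=1$ and that the additive character $\psi=\psi_c$ is the same in both cases, to obtain
\[
\epsilon(\mu, \psi, \d x)\epsilon(\mu^{-1}, \psi, \d x)=\mu(c)\mu(q)G(\mu^{-1})\cdot \mu^{-1}(c)\mu^{-1}(q)G(\mu)=G(\mu)G(\mu^{-1}).
\]
So the corollary reduces to the identity $G(\mu)G(\mu^{-1})=q\mu(-1)$ for a ramified (hence non-trivial) character $\mu$ of $\F_q^\times$.

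Next I would establish that Gauss sum identity directly. Writing $G(\mu^{-1})=\sum_{b\in\F_q^\times}\mu^{-1}(b)e^{2\pi i b/q}$ and $G(\mu)=\sum_{a\in\F_q^\times}\mu(a)e^{2\pi i a/q}$, I would substitute $a=-b d$ (equivalently reindex by $d=-a/b\in\F_q^\times$) so that
\[
G(\mu)G(\mu^{-1})=\sum_{b,d\in\F_q^\times}\mu(-bd)\mu^{-1}(b)e^{2\pi i b(1-d)/q}=\mu(-1)\sum_{d\in\F_q^\times}\mu(d)\sum_{b\in\F_q^\times}e^{2\pi i b(1-d)/q}.
\]
For $d=1$ the inner sum is $q-1$; for $d\neq 1$ the inner sum over a non-trivial additive character is $-1$. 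Hence the double sum equals $\mu(1)(q-1)-\sum_{d\neq 1}\mu(d)=(q-1)\mu(1)+\mu(1)-\sum_{d\in\F_q^\times}\mu(d)=q\mu(1)-\sum_{d\in\F_q^\times}\mu(d)$, and since $\mu$ is non-trivial the remaining character sum vanishes, leaving $q$. Therefore $G(\mu)G(\mu^{-1})=q\mu(-1)$, which is exactly the claimed formula. (Alternatively one can cite the standard fact $G(\mu)\overline{G(\mu)}=q$ together with $\overline{G(\mu)}=\mu(-1)G(\mu^{-1})$, but it seems cleaner to give the one-line computation.)

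There is essentially no obstacle here: the only points requiring a little care are the bookkeeping of which Gauss-sum convention is in force (the paper's $G(\mu^{-1})$ uses $\mu^{-1}$ against the additive character, so the product telescopes the $\mu(c)\mu(q)$ factors cleanly) and the orthogonality relations for the additive and multiplicative characters of $\F_q$. I would present the proof in the two displayed steps above, concluding $\epsilon(\mu,\psi,\d x)\epsilon(\mu^{-1},\psi,\d x)=q\mu(-1)$.
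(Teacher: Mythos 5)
Your proof is correct and follows essentially the same route as the paper: apply Proposition \ref{form:eps} to both $\mu$ and $\mu^{-1}$ so that the $\mu(c)\mu(q)$ factors cancel, reducing to the Gauss sum identity $G(\mu)G(\mu^{-1})=q\mu(-1)$. The paper simply cites the standard facts $G(\mu^{-1})=\mu(-1)\overline{G(\mu)}$ and $|G(\mu)|^2=q$ at this last step, whereas you verify the identity by direct computation; this is a cosmetic difference.
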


\begin{proof}
Applying the result of Proposition \ref{form:eps} to $\mu$ and $\mu^{-1}$ leads to
\[
\epsilon(\mu, \psi, \d x)\epsilon(\mu^{-1}, \psi, \d x)=G(\mu^{-1})G(\mu).
\]
By standard properties of Gauss sums, we have $G(\mu^{-1})=\mu(-1)\overline{G(\mu)}$. Using the fact that $\vert G(\mu)\vert^2=q$, we obtain the desired result.
\end{proof}

\section{Triple product root numbers}\label{s:trip}

We work with the modular curve $X_0(p)$ of prime level and assume that the genus of $X_0(p)$ is positive (i.e., $p=11$ or $p>13$). 
Let 
\[
f_1=\sum_{n\geq 1} a_n(f_1)q^n, \qquad f_2=\sum_{n\geq 1} a_n(f_2)q^n, \qquad f_3=\sum_{n\geq 1} a_n(f_3)q^n
\]
be three normalised cuspidal eigenforms of level $\Gamma_0(p)$, and let $F:=f_1\otimes f_2\otimes f_3$.  
Recall that $\chi$ denotes the Legendre symbol at $p$. Let $L(F,\chi,s)$ be the $L$-function associated to the compatible family of $8$-dimensional $\ell$-adic Galois representations 
\[
\{ V_\ell(F,\chi)=V_\ell(f_1)\otimes V_\ell(f_2)\otimes V_\ell(f_3) \otimes \chi \}_\ell,
\]
where the representations $V_\ell(f_{i})$ for $i\in \{ 1,2,3 \}$ are the ones in \eqref{rep:f}.  

Following the recipe in \cite{TateThesis}, one may lift $\chi$ to a unitary Hecke character $\chi_{\bA} :  \bA_\Q^\times/\Q^\times \lra \C^\times$ by setting $\chi_{\bA}(g)=\prod_{v} \chi_v(g_v)$ where $v$ runs over all places of $\Q$ and 
\[
    \chi_\infty(g_\infty)=
    \begin{cases}
    1 & \text{ if } \chi(-1)=1 \\
    1 & \text{ if } \chi(-1)=-1, g_\infty>0 \\
    -1 & \text{ if } \chi(-1)=-1, g_\infty<0
    \end{cases} \qquad  \chi_\ell(g_\ell)=
    \begin{cases}
    \chi(\ell)^{\ord_\ell(g_\ell)} & \text{ if } \ell\neq p \\
    \chi(j)^{-1} & \text{ if } g_p\in p^k(j+p\Z_p). 
    \end{cases}
\]
The collection of $\ell$-adic characters $\{ \chi_\ell : \Q_\ell
^\times \lra \C^\times\}_\ell$ is characterised by the following:
\begin{itemize}
    \item For $\ell\neq p$, $\chi_\ell$ is unramified with $\chi_\ell(\ell)=\left( \frac{\ell}{p} \right)$;
    \item $\chi_p$ is tamely ramified, $\chi_p(p)=1$, and $\chi_p\vert_{\Z_p^\times}=\left( \frac{\cdot}{p} \right)$.
\end{itemize}
The Weil--Deligne representation of $F\otimes \chi$ at a prime $q$ is the $8$-dimensional representation 
\[
\sigma'_{F,\chi,q}=\sigma'_{f_1, q}\otimes \sigma'_{f_2, q} \otimes \sigma'_{f_3, q}\otimes \chi_q,
\]
where $\sigma'_{f_i, q}$ is described in Example \ref{ex:3}.
Concretely, we have
\[
\sigma'_{F,\chi,q}=(\sigma_{F,\chi, q}, N_{F,\chi, q})=(\sigma_{f_1, q}\otimes \sigma_{f_2, q} \otimes \sigma_{f_3, q} \otimes \chi_q, N_{f_1, q}\otimes 1\otimes 1+1\otimes N_{f_2, q}\otimes 1 +1\otimes 1\otimes N_{f_3, q}).
\]
Following the recipe in \cite{del73}, we attach to $F\otimes \chi$ a completed $L$-function 
\[
\Lambda(F,\chi,s):=2^4(2\pi)^{3-4s}\Gamma(s-1)^3\Gamma(s)L(F,\chi,s).
\]
(This is based on the fact that the Hodge numbers of $F\otimes \chi$ are given by 
$h^{3,0}=1$ and $h^{2,1}=3$.)

Following general recipes for motives \cite{del73}, we define the conductor, the global epsilon factor, and the global root number of $F\otimes \chi$.
The conductor of $F\otimes \chi$ is defined to be
\begin{equation}\label{def:cond}
\cond(F,\chi):=\prod_q q^{a(\sigma'_{F,\chi, q})}\in \mathbb{N},
\end{equation}
where the product is over all finite places $q$. 
Consider $\psi=\prod_v \psi_v : \A_\Q/\Q \lra \C$ an additive character of the ad\`eles and let $\d x$ denote the normalised Haar measure on the ad\`eles such that $\int_{\A_\Q/\Q} \d x=1$. It decomposes as a product of local Haar measures $\d x_v$  which satisfy $\int_{\Z_v} \d x_v=1$ for almost all finite places $v$. The global epsilon factor of $F\otimes \chi$ is then defined to be 
\[
\epsilon(F,\chi):=\prod_v \epsilon'(\sigma'_{F,\chi, v}, \psi_v, \d x_v),
\]
which is independent of the choice of $\psi$ and $\d x$. Moreover, $\epsilon'(\sigma'_{F,\chi, v}, \psi_v, \d x_v)=1$ for almost all $v$ (in fact, for all $v\neq \infty, p$ as we will see in the proof of Theorem \ref{roottripleEram} below).
The global root number is defined to be 
\begin{equation}\label{def:grn}
    W(F,\chi)=\prod_v W(\sigma'_{F,\chi, v}, \psi_v, \d x_v).
\end{equation} 
The completed $L$-function 
\begin{equation}\label{compL}
\Lambda^*(F,\chi, s):=\cond(F,\chi)^{\frac{s}{2}}\Lambda(F,\chi, s)
\end{equation}
admits analytic continuation to the entire complex plane and satisfies the functional equation 
\begin{equation}\label{eq:FE}
\Lambda^*(F, \chi, s)=W(F,\chi)\Lambda^*(F, \chi, 4-s).
\end{equation}

\begin{remark}\label{rem:psq}
    Notice that $F\otimes \chi$ is equal to the tensor product of the three newforms $f_1, f_2$ and $f_3^{(p)}$, where $f_3^{(p)}=f_3 \otimes \chi$. The $L$-function $\Lambda^*(F,\chi,s)$ is the triple product $L$-function associated to the triple $(f_1, f_2, f_3^{(p)})$. The first two forms have level $\Gamma_0(p)$ while the form $f_3^{(p)}$ has level $\Gamma_0(p^2)$. Hence, the analytic properties and functional equation of $\Lambda^*(F, \chi, s)$ fall outside the scope of \cite{grosskudla} where the case of three newforms of the same square-free level is treated. However, as explained in \cite{harriskudla}, the analytic properties and functional equation in this case follow from \cite{PSR}.
\end{remark}

\begin{theorem}\label{roottripleEram}
$W(F,\chi)=-1.$
\end{theorem}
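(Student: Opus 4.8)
The plan is to compute $W(F,\chi)$ place by place via the product formula $W(F,\chi)=\prod_v W(\sigma'_{F,\chi,v},\psi_v)$, exploiting independence of the global $\epsilon$-factor from the auxiliary data to choose $\psi=\prod_v\psi_v$ with every finite $\psi_q$ unramified ($n(\psi_q)=0$) and $\d x=\prod_v\d x_v$ with $\d x_q(\Z_q)=1$ at all finite places. I would first dispose of all $q\neq p,\infty$: there $\sigma_{f_i,q}$ is unramified and $N_{f_i,q}=0$ (Proposition \ref{WDf}), $\chi_q$ is unramified, so $\sigma_{F,\chi,q}$ is an unramified Weil representation with $N_{F,\chi,q}=0$; then $\delta(\sigma'_{F,\chi,q})=1$ (the quotient in \eqref{def:deltawd} is zero-dimensional) and $\epsilon(\sigma_{F,\chi,q},\psi_q,\d x_q)=1$ by the unramified case of \eqref{epsQq}, so $W(\sigma'_{F,\chi,q},\psi_q)=1$. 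This reduces everything to $p$ and $\infty$.

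The heart of the argument is the place $p$. Writing $\nu_0:=\lambda_1\lambda_2\lambda_3\,\chi_p$ with $\lambda_i$ the unramified quadratic character $\lambda_i(\Phi)=a_p(f_i)$, Proposition \ref{WDf} gives, as Weil representations, $\sigma_{f_i,p}\cong\lambda_i\oplus\lambda_i\omega_p^{-1}$, and hence
\[
\sigma_{F,\chi,p}\;\cong\;\nu_0\omega_p^{-3}\;\oplus\;(\nu_0\omega_p^{-2})^{\oplus3}\;\oplus\;(\nu_0\omega_p^{-1})^{\oplus3}\;\oplus\;\nu_0 .
\]
The crucial point — and where the twisted case diverges from \cite{grosskudla} — is that $\chi_p$ is ramified while $\omega_p$ and the $\lambda_i$ are unramified, so $\sigma_{F,\chi,p}$ restricted to $I_p$ is the nontrivial scalar $\nu_0\vert_{I_p}$; thus $V^{I_p}=0$, $\delta(\sigma'_{F,\chi,p})=1$, and $\epsilon'(\sigma'_{F,\chi,p},\psi_p,\d x_p)=\epsilon(\sigma_{F,\chi,p},\psi_p,\d x_p)$. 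Since $\nu_0$ is a quadratic character of conductor $1$ (its restriction to $\Z_p^\times$ is the Legendre symbol), $\nu_0^{-1}=\nu_0$ and Corollary \ref{coro:minusone} yields $\epsilon(\nu_0,\psi_p,\d x_p)^2=p\,\nu_0(-1)$ with $\nu_0(-1)=\pm1$. Combining additivity of $\epsilon$ (Theorem \ref{prop:eps}(i)) with Proposition \ref{propeps:cyctwist} — which for $n(\psi_p)=0$, $a(\nu_0)=1$ reads $\epsilon(\nu_0\omega_p^{-k},\psi_p,\d x_p)=p^{k}\epsilon(\nu_0,\psi_p,\d x_p)$ — I would obtain
\[
\epsilon(\sigma_{F,\chi,p},\psi_p,\d x_p)=p^{3}\cdot(p^{2})^{3}\cdot(p^{1})^{3}\cdot p^{0}\cdot\epsilon(\nu_0,\psi_p,\d x_p)^{8}=p^{12}\big(p\,\nu_0(-1)\big)^{4}=p^{16}>0 ,
\]
so $W(\sigma'_{F,\chi,p},\psi_p)=1$. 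Observe that $\nu_0(-1)$ drops out because the representation is $8$-dimensional, which is precisely why the answer is the clean $-1$ rather than something involving $\chi(-1)$.

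It remains to handle $\infty$. Since $\chi_\infty$ is trivial if $p\equiv1\bmod4$ and the sign character of $\R^\times$ if $p\equiv3\bmod4$, and the sign character is trivial on $\N_{\C/\R}(\C^\times)=\R_{>0}$, twisting each two-dimensional $\sigma_{f_i,\infty}=\mathrm{Ind}_{W(\bar\R/\C)}^{W(\bar\R/\R)}(\phi_1)$ by $\chi_\infty$ leaves it unchanged; hence $\sigma_{F,\chi,\infty}\cong\sigma_{F,\infty}$ and $W_\infty(F,\chi)=W_\infty(F)$. To evaluate the latter I see two routes: (a) decompose $\sigma_{F,\infty}$ by Mackey into $\mathrm{Ind}(\phi_1)^{\oplus3}\oplus\mathrm{Ind}(\phi_3)$, where $\phi_j(z)=(z/\bar z)^{j/2}$, and use that the archimedean root number of $\mathrm{Ind}\big((z/\bar z)^{j/2}\big)$ is $i^{j+1}$, giving $W_\infty(F)=(i^{2})^{3}\,i^{4}=-1$; or (b) avoid archimedean $\epsilon$-factors altogether by invoking $W(F)=a_p(f_1)a_p(f_2)a_p(f_3)$ from \cite{grosskudla} together with the untwisted Steinberg computation $W_p(F)=-a_p(f_1)a_p(f_2)a_p(f_3)$ (the same $\delta$-factor bookkeeping as above, now with the monodromy $N$ present), which forces $W_\infty(F)=W(F)/W_p(F)=-1$. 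Either way $W_\infty(F,\chi)=-1$, and therefore $W(F,\chi)=W_\infty(F,\chi)\cdot W(\sigma'_{F,\chi,p},\psi_p)\cdot\prod_{q\neq p,\infty}1=-1$, proving Theorem \ref{roottripleEram} and hence Theorem \ref{thm:intro} via the functional equation \eqref{eq:FE}.

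I expect the main obstacle to be the place-$p$ computation: first, recognising that the ramified twist by $\chi_p$ kills the inertia invariants, so that the $\delta$-factor of \eqref{def:deltawd} becomes trivial and one is reduced to the computable Weil-representation $\epsilon$-factor (this is the structural novelty relative to the untwisted Gross--Kudla situation, where the monodromy contributes); and second, carrying the eight one-dimensional constituents $\nu_0\omega_p^{-k}$ through Propositions \ref{propeps:cyctwist} and \ref{form:eps} and Corollary \ref{coro:minusone} so that, via $\epsilon(\nu_0)^2=p\,\nu_0(-1)$ and $\nu_0(-1)^4=1$, the total collapses to a positive power of $p$. A secondary point is pinning down $W_\infty(F)=-1$ rigorously given that the archimedean theory of Weil--Deligne representations is not set up in the text; route (b) sidesteps this at the cost of importing the untwisted computation.
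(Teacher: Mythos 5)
Your proposal is correct and follows essentially the same route as the paper: dismissing $q\neq p,\infty$ by unramifiedness, using ramifiedness of $\chi_p$ to kill $V^{I_p}$ so that $\delta(\sigma'_{F,\chi,p})=1$, decomposing the semisimplification of $\chi_p\lambda\omega_p^{-3}\otimes\spe(2)^{\otimes 3}$ into eight one-dimensional pieces, and collapsing the Gauss-sum contribution via $\epsilon(\nu_0)^2=p\,\nu_0(-1)$ and $\nu_0(-1)^4=1$ to get $W_p=1$, and finally quoting $W_\infty(F)=-1$ from Gross--Kudla. The only real difference is at $\infty$: you justify $\sigma_{F,\chi,\infty}\cong\sigma_{F,\infty}$ representation-theoretically (sign character trivial on $\N_{\C/\R}\C^\times$) and offer a Mackey computation as an alternative, while the paper states the equivalent fact that twisting by a finite-order character leaves the Hodge structure, hence the archimedean factor, unchanged.
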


\begin{proof}
The local root number at infinity of $F\otimes \chi$ is the same as the one of $F$ since twisting by finite order characters does not affect Hodge structures. In particular, it is equal to $-1$ by \cite{grosskudla}. We thus focus on the local root numbers at the finite places. For any prime $\ell$, we choose an additive character $\psi_\ell$ with $n(\psi_\ell)=0$, as well as the Haar measure $\d x_\ell$ normalised such that $\int_{\Z_\ell} \d x_\ell=1$.

Let $q$ be a prime distinct from $p$. By Proposition \ref{WDf} we have, for $i\in \{ 1,2,3 \}$, 
\[
\sigma'_{f_i, q}=\sigma_{f_i, q}=\xi_{i, q}\oplus \xi_{i, q}^{-1}\omega_q^{-1}
\] 
for some unramified characters $\xi_{i, q}$. The character $\chi_q$ is also unramified. We therefore obtain  
\begin{multline*}
    \sigma'_{F,\chi, q}=\sigma_{F,\chi, q}=\xi_{1,q}\xi_{2,q}\xi_{3,q}\chi_q
\oplus \xi_{1,q}\xi_{2,q}^{-1}\xi_{3,q}\omega_q^{-1}\chi_q
\oplus \xi_{1,q}^{-1}\xi_{2,q}\xi_{3,q}\omega_q^{-1}\chi_q
\oplus \xi_{1,q}^{-1}\xi_{2,q}^{-1}\xi_{3,q}\omega_q^{-2}\chi_q \\
\oplus \xi_{1,q}\xi_{2,q}\xi_{3,q}^{-1}\omega_q^{-1}\chi_q 
\oplus \xi_{1,q}\xi_{2,q}^{-1}\xi_{3,q}^{-1}\omega_q^{-2}\chi_q
\oplus \xi_{1,q}^{-1}\xi_{2,q}\xi_{3,q}^{-1}\omega_q^{-2}\chi_q
\oplus \xi_{1,q}^{-1}\xi_{2,q}^{-1}\xi_{3,q}^{-1}\omega_q^{-3}\chi_q.
\end{multline*}
Since all characters involved are unramified, Theorem \ref{prop:eps} $i)$ and \eqref{epsQq} imply, given the choice of $\psi_q$ and $\d x_q$, that
$\epsilon'(\sigma'_{F,\chi,q}, \psi_q, \d x_q)=1,$
and in particular $W(\sigma'_{F,\chi,q})=1$.

For each $i\in \{ 1,2,3 \}$, let $\lambda_i$ be the unramified quadratic character of $W(\bar{\Q}_p/\Q_p)$ defined by $\lambda_i(\Phi)=a_p(f_i)$, where $\Phi$ denotes an inverse Frobenius element. We will sometimes view it as a character of $\Q_p^\times$ via the identification \eqref{eq:artin}. Let $\lambda=\lambda_1\lambda_2\lambda_3$ denote the product of these characters.
By Proposition \ref{WDf}, the Weil--Deligne representation of $F\otimes \chi$ at $p$ is given by 
$$\sigma'_{F,\chi,p}=\chi_p \lambda \omega_p^{-3}\otimes \spe(2)^{\otimes 3}.$$ 
Let $V$ denote the complex vector space associated to it.
The character $\chi_p$ is tamely ramified, i.e., $a(\chi_p)=1$. Suppose, by contradiction, that $V^{I_p}\neq 0$. Then there is a non-zero vector $v\in V$ which is fixed by the action of the inertia $I_p$. But
$\sigma_{F,\chi,p}(g)(v)=\chi_p(g)v$ for all $g\in I_p$
since $\sigma_{F,\chi,p}=\sigma_{f_1,p}\otimes\sigma_{f_2,p}\otimes\sigma_{f_3,p}\otimes \chi_p$ and $\sigma_{f_1,p}\otimes\sigma_{f_2,p}\otimes\sigma_{f_3,p}$ is unramified. As $v\in V^{I_p}$, we must have $\chi_p(g)v=v$ which implies that $\chi_p(g)=1$ since $v\neq 0$. Since this holds for all $g\in I_p$, it contradicts the fact that $\chi_p$ is ramified. Hence $V^{I_p}=0$ and as a consequence $\delta(\sigma'_{F,\chi, p})=1$, which implies that
\begin{equation}\label{epsss}
\epsilon'(\sigma'_{F\chi,p}, \psi_p, \d x_p)=\epsilon(\sigma_{F,\chi,p}, \psi_p, \d x_p).
\end{equation}
If $( e_0, e_1 )$ denotes the standard basis of $\C^2$, then $\spe(2)$ is the representation $(\sigma_p, N)$ defined in Example \ref{def:sp2} by the matrices 
\[
\sigma_p:=\left(\begin{matrix} 1 & 0 \\ 0 & \omega_p \end{matrix}\right) \qquad \text{ and } \qquad N:=\left(\begin{matrix} 0 & 0 \\ 1 & 0 \end{matrix}\right).
\]
Let us denote by $V_i=\C^2$ the complex vector space associated to $\sigma'_{f_i, p}$ and by $(e_0^{(i)}, e_1^{(i)})$ its standard basis for each $i\in \{ 1,2,3\}$. Then $V=V_1\otimes_{\C} V_2\otimes_{\C} V_3=\C^8$ is the space of $\sigma_{M, p}$ and an ordered basis for it is given by 
\begin{equation}\label{basis}
\begin{aligned}
B:=( e_0^{(1)}\otimes e_0^{(2)}\otimes e_0^{(3)}, e_0^{(1)}\otimes e_0^{(2)}\otimes e_1^{(3)}, e_0^{(1)}\otimes e_1^{(2)}\otimes e_0^{(3)}, e_0^{(1)}\otimes e_1^{(2)}\otimes e_1^{(3)}, \\
e_1^{(1)}\otimes e_0^{(2)}\otimes e_0^{(3)},  e_1^{(1)}\otimes e_0^{(2)}\otimes e_1^{(3)}, e_1^{(1)}\otimes e_1^{(2)}\otimes e_0^{(3)}, e_1^{(1)}\otimes e_1^{(2)}\otimes e_1^{(3)} ).
\end{aligned}
\end{equation}
With respect to the basis $B$, the representation 
\[
\spe(2)^{\otimes 3}=(\sigma_p^{\otimes 3}, N^{\otimes 3}:=N\otimes 1\otimes 1+1\otimes N\otimes 1+1\otimes 1 \otimes N)
\] 
is given by the matrices
\[
\sigma_p^{\otimes 3}=
\left(
\begin{matrix}
1       &       0       &       0       &       0       &       0       &       0       &       0       &       0       \\
0       & \omega_p   &       0       &       0       &       0       &       0       &       0       &       0       \\
0       &       0       &  \omega_p  &       0       &       0       &       0       &       0       &       0       \\
0       &       0       &       0      & \omega_p^2 &       0       &       0       &       0       &       0       \\
0       &       0       &       0       &       0       & \omega_p   &       0       &       0       &       0       \\
0       &       0       &       0       &       0       &       0       & \omega_p^2 &       0       &       0       \\
0       &       0       &       0       &       0       &       0       &       0       & \omega_p^2 &       0       \\
0       &       0       &       0       &       0       &       0       &       0       &       0       &       \omega_p^3    
\end{matrix}
\right)
\qquad \text{ and } \qquad
N^{\otimes 3}=
\left(
\begin{matrix}
0       &       0       &       0       &       0       &       0       &       0       &       0       &       0       \\
1       &       0       &       0       &       0       &       0       &       0       &       0       &       0       \\
1       &       0       &       0       &       0       &       0       &       0       &       0       &       0       \\
0       &       1       &       1       &       0       &       0       &       0       &       0       &       0       \\
1       &       0       &       0       &       0       &       0       &       0       &       0       &       0       \\
0       &       1       &       0       &       0       &       1       &       0       &       0       &       0       \\
0       &       0       &       1       &       0       &       1       &       0       &       0       &       0       \\
0       &       0       &       0       &       1       &       0       &       1       &       1       &       0       
\end{matrix}
\right).
\]
We conclude that 
\begin{equation}\label{weilE}
\sigma_{F,\chi, p}\simeq \chi_p\lambda\omega_p^{-3}\oplus \chi_p\lambda\omega_p^{-2}\oplus \chi_p\lambda\omega_p^{-2}\oplus \chi_p\lambda\omega_p^{-1}\oplus \chi_p\lambda\omega_p^{-2}\oplus \chi_p\lambda\omega_p^{-1}\oplus \chi_p\lambda\omega_p^{-1}\oplus \chi_p\lambda.
\end{equation}
By Theorem \ref{prop:eps} $i)$ and Proposition \ref{propeps:cyctwist}, we obtain 
\[
\epsilon(\sigma_{F,\chi, p}, \psi_p, \d x_p)=\lambda^8\omega_p^{-12}(p^{(n(\psi)\dim(\chi_p)+a(\chi_p))})\epsilon(\chi_p, \psi, \d x)^8=p^{12}\epsilon(\chi_p, \psi_p, \d x_p)^8, 
\]
since $a(\chi_p)=1$ and $\lambda$ is a quadratic character.
By Corollary \ref{coro:minusone}, we see that 
\[
\epsilon(\sigma_{F,\chi,p}, \psi_p, \d x_p)=p^{12}(p\chi_p(-1))^4=p^{16}.
\]
In conclusion, we deduce that $W(\sigma'_{F,\chi,p})=1$, which completes the proof.
\end{proof}

\begin{remark}
We proceed to extract the conductor $\cond(F,\chi)$ from the proof. When $q$ is distinct from $p$, we saw that $\sigma'_{F,\chi, q}$ is unramified, hence $a(\sigma'_{F,\chi, q})=0$. At the prime $p$, we have 
\begin{multline*}
a(\sigma'_{F,\chi,p}) = a(\sigma_{F,\chi,p})+\dim V^{I_p}/(V^{I_p}\cap \ker N_{F,\chi,p})= a(\sigma_{F,\chi,p}) \\
=a(\chi_p\lambda\omega_p^{-3})+3a(\chi_p\lambda\omega_p^{-2})+3a(\chi_p\lambda\omega_p^{-1})+a(\chi_p\lambda) 
    =8a(\chi_p)=8.
\end{multline*}
We conclude that
\[
\cond(F,\chi)=\prod_\ell \ell^{a(\sigma'_{F,\chi, p})}=p^8.
\]
\end{remark}

\begin{remark}\label{rem:end}
    By contrast, the same techniques applied to the untwisted case lead to the equalities $$\delta(\sigma'_{F, p})=-p^{10}\lambda^5(\Phi)=-p^{10}a_p(f_1)a_p(f_2)a_p(f_3)$$ and $\epsilon(\sigma_{F, p}, \psi_p, \d x_p)=1$. This in turn implies that $W(F)=a_p(f_1)a_p(f_2)a_p(f_3)$, in agreement with \cite{grosskudla}. Moreover, the conductor in this case is $\cond(F)=p^5$.
\end{remark}

\section*{Acknowledgements}
The author thanks Henri Darmon and Jan Vonk for helpful comments and suggestions, as well as the anonymous referee for valuable feedback.
Part of this work was done while the author was supported by a Scholarship for Outstanding PhD Candidates (ISM) at McGill University. The author is currently supported by an Edixhoven Post-Doctoral Fellowship at Leiden University.

\end{document}